\documentclass[preprint,12pt,authoryear]{elsarticle}

\usepackage{amsmath}
\usepackage{amssymb}
\usepackage{amsthm}
\usepackage{booktabs}
\usepackage{float}
\usepackage{graphicx}
\usepackage{mathtools}
\usepackage{enumitem}
\usepackage{microtype}
\usepackage{tikz}
\usetikzlibrary{arrows.meta,positioning}

\journal{European Journal of Operational Research}

\newtheorem{assumption}{Assumption}
\newtheorem{definition}{Definition}
\newtheorem{proposition}{Proposition}
\newtheorem{remark}{Remark}
\newtheorem{example}{Example}

\newcommand{\E}{\mathbb{E}}
\newcommand{\R}{\mathbb{R}}

\newcommand{\cP}{\mathcal{P}}

\newcommand{\CVaR}{\operatorname{CVaR}}

\newcommand{\argmax}{\operatorname*{arg\,max}}
\newcommand{\argmin}{\operatorname*{arg\,min}}
\newcommand{\pos}[1]{\left(#1\right)^+}

\begin{document}

\begin{frontmatter}

\title{A Finite-Candidate Distributionally Robust Tri-Objective Newsvendor
Model for Energy Storage Capacity Reservation}

\author[aff1]{Nuerxiati Abudurexiti}
\affiliation[aff1]{
  organization={Department of Management Science and Engineering},
  addressline={Xinjiang University},
  city={Urumqi},
  postcode={830046},
  country={China}
}

\begin{abstract}
Energy storage operators often reserve usable capacity before uncertain market
opportunities are realized, for example when a day-ahead operator commits
capacity for an evening peak-spread or ancillary-service window. Price spikes,
renewable forecast errors, activation calls and imbalance penalties can make the
decision-relevant opportunity distribution asymmetric and heavy-tailed rather
than Gaussian. The key difficulty is not only non-normality, but also
distributional misspecification: a capacity calibrated to one opportunity
distribution may generate severe tail losses or large regret under another
plausible distribution. This paper develops a finite-candidate distributionally
robust tri-objective storage-newsvendor framework for this capacity-reservation
decision. The model jointly maximizes worst-case expected profit, minimizes
worst-case CVaR tail loss and minimizes maximum regret over candidate non-normal
distributions. We derive a critical-fractile expected-profit benchmark and a
finite-scenario quadratic programming formulation that solves the three
distributionally robust objectives together. The empirical and numerical results
show that capacity decisions are sensitive to distributional shape, not only to
mean and variance, and that the three robust objectives generate different but
economically interpretable reservation policies.
\end{abstract}

\begin{highlights}
\item We formulate energy-storage capacity reservation as a finite-candidate distributionally robust newsvendor-type decision problem.
\item We develop a finite-candidate distributionally robust tri-objective model: max worst-case expected profit, min worst-case CVaR and min max regret.
\item We derive a finite-scenario quadratic programming form for the joint distributionally robust model.
\end{highlights}

\begin{keyword}
Non-normal uncertainty\sep{} CVaR\sep{} Maximum regret\sep{} Distributionally robust optimization
\MSC[2020] 90B05\sep{} 90C29\sep{} 90C47\sep{} 90C17
\end{keyword}

\end{frontmatter}

\section{Introduction}
\label{sec:introduction}

Energy storage plays an increasingly important role in electricity markets,
especially when renewable generation, imbalance management and ancillary-service
products create short-lived operating opportunities. Consider a storage operator
that must reserve \(q\) MWh of usable capacity before an evening operating window:
the realized opportunity may come from an unusually large price spread,
ancillary-service activation, renewable forecast error or imbalance penalty.
Reserving too much capacity leaves idle capacity and degradation cost, whereas
reserving too little capacity misses scarcity-value or flexibility revenue. Thus,
the capacity reservation problem has a natural asymmetric trade-off between
excess capacity and insufficient capacity.

Figure~\ref{fig:storage_reservation_mechanism} summarizes this running example.
The operator commits capacity before observing the effective opportunity
\(\xi\). After \(\xi\) is realized, the same decision may generate utilized
capacity revenue, idle-capacity salvage value or missed-opportunity loss.

\begin{figure}[H]
\centering
\resizebox{0.95\textwidth}{!}{%
\begin{tikzpicture}[
  node distance=0.65cm and 0.65cm,
  >=Stealth,
  every node/.style={align=center,font=\small},
  box/.style={draw,rounded corners,minimum width=2.45cm,minimum height=0.85cm},
  outcome/.style={draw,rounded corners,minimum width=2.30cm,minimum height=0.75cm}
]
\node[box] (decision) {Before window\\reserve \(q\)};
\node[box,right=of decision] (uncertainty) {Market/system state\\realizes \(\xi\)};
\node[outcome,right=of uncertainty,yshift=0.95cm] (use) {\(\xi\ge q\)\\utilized capacity};
\node[outcome,right=of uncertainty,yshift=-0.95cm] (idle) {\(\xi<q\)\\idle capacity};
\node[box,right=of use,xshift=0.25cm,yshift=-0.95cm] (payoff) {Profit effect\\revenue, salvage,\\missed loss};
\draw[->] (decision) -- (uncertainty);
\draw[->] (uncertainty) -- (use);
\draw[->] (uncertainty) -- (idle);
\draw[->] (use) -- (payoff);
\draw[->] (idle) -- (payoff);
\end{tikzpicture}
}
\caption{The reserved capacity \(q\) is chosen before the effective storage
opportunity \(\xi\) is observed, creating a newsvendor-type overage--underage
trade-off. The figure anchors the paper's running example and the profit function
used in the model.}
\label{fig:storage_reservation_mechanism}
\end{figure}

The uncertainty behind this trade-off is rarely normal. Electricity prices and
flexibility opportunities are affected by scarcity events, renewable forecast
errors, extreme weather, market rules and system imbalance. Empirical studies on
electricity prices document spikes, abrupt market-state shifts, extreme
volatility and heavy tails~\citep{Weron2009,Weron2014}. Energy-storage operation has also been studied
as a stochastic operational problem in settings such as stationary storage
management~\citep{WeitzelGlock2018}, rolling-forecast
operation~\citep{GhadimiPowell2024}, and frequency-regulation
participation~\citep{Lauinger2024}. These studies show that storage decisions are strongly
affected by market uncertainty. However, most of this literature focuses on
multi-period dispatch, bidding or dynamic control. The single-period
pre-commitment question, namely how much capacity should be reserved before a
non-normal market opportunity is realized, has received much less explicit
analytical treatment.

\subsection*{Positioning in related work}

This paper is positioned against four related streams. First, the storage
operation literature has studied stationary storage management, rolling-forecast
operation and frequency-regulation participation~\citep{WeitzelGlock2018,
GhadimiPowell2024,Lauinger2024}. These studies motivate the market setting, but
their main focus is dynamic dispatch or bidding rather than a single-period
capacity-reservation layer.

Second, the newsvendor literature provides the basic asymmetric pre-commitment
structure. The surveys of \citet{Khouja1999} and \citet{Qin2011} showed that the
newsvendor model had been extended far beyond retail inventory.
\citet{PetruzziDada1999} further studied price-inventory interaction. This stream
establishes the relevance of the newsvendor structure, but it does not directly
model storage opportunities generated by price spreads, reserve activation or
imbalance events.

Third, robust and regret-based newsvendor models address distributional
misspecification. For example, \citet{GallegoMoon1993} developed distribution-free newsvendor
models based on partial information, and \citet{PerakisRoels2008} studied
partial-information decisions through maximum regret. More recent robust and
distributionally robust models, including \citet{XuZhengJiang2022} and
\citet{Fu2024}, constructed ambiguity sets from nonparametric or
stochastic-dominance information. We adapt this logic to storage capacity
reservation by treating candidate non-normal opportunity distributions as the
source of regret.

Fourth, risk-averse and data-driven newsvendor studies motivate the CVaR and
sample-based components. CVaR has a tractable optimization
representation~\citep{RockafellarUryasev2000}, and inventory studies have used
CVaR or coherent risk measures to control tail outcomes~\citep{GotohTakano2007,
AhmedCakmakShapiro2007}. Sample-average and contextual newsvendor studies have
also shown that finite-sample performance depends on the distributional behavior
near the critical quantile~\citep{Levi2015,Lin2022,BanRudin2019,Huber2019,
Erkip2023,Serrano2024,Sadana2025}. These results are especially relevant for
storage because rare price spikes and heavy tails can make empirical critical
quantiles unstable.

The purpose of this paper is to develop a tractable finite-candidate
distributionally robust newsvendor-type model for energy-storage capacity
reservation under non-normal uncertainty and distributional misspecification.
Different from generic newsvendor models, the
uncertain quantity in this paper is an effective storage opportunity generated by
market prices, ancillary-service calls, renewable forecast errors and imbalance
conditions. Different from full multi-period storage-operation models, the focus
is a transparent single-period capacity-reservation layer. We use the
evening-window running example to define the capacity decision, the effective
opportunity and the asymmetric payoff terms. On this common economic primitive,
we formulate one finite-candidate distributionally robust tri-objective decision
problem that simultaneously
maximizes worst-case expected profit, minimizes worst-case CVaR tail loss and
minimizes maximum regret under distributional ambiguity.

The main contributions of this paper are as follows. First,
Section~\ref{sec:unified_framework} formulates energy-storage capacity
reservation as a newsvendor-type decision problem and derives a profit function
that contains utilized-capacity revenue, reservation cost, salvage value and
missed-opportunity penalty. Second,
Section~\ref{subsec:multi_objective} formulates a finite-candidate
distributionally robust tri-objective optimization problem that puts worst-case
expected-profit maximization, worst-case CVaR minimization and maximum-regret
minimization into one Pareto decision model.
Third, Section~\ref{sec:closed_form_solutions} shows that the expected-profit
component admits a critical-fractile solution, so non-normal uncertainty affects
the decision through the relevant quantile rather than through normality itself.
Fourth, Section~\ref{sec:finite_scenario_solution} derives a finite-scenario
quadratic programming formulation that solves the three objectives jointly rather
than as three disconnected optimization problems. Fifth,
Section~\ref{sec:numerical_analysis} uses public RTE aFRR data and EM-fitted
log-NMVM candidates to show that the normal approximation can severely distort
the empirical critical capacity. It then evaluates finite-candidate robust
expected-profit, CVaR, regret and multi-objective policies on the empirical
candidate set.

The remainder of the paper is organized as follows.
Section~\ref{sec:unified_framework} develops the unified storage newsvendor
framework, including the opportunity variable, the profit function, and the
finite-candidate distributionally robust expected-profit--CVaR--regret
formulation and its finite-scenario
solution form.
Section~\ref{sec:closed_form_solutions} derives closed-form benchmark solutions
and analytical implications of non-normality. Section~\ref{sec:numerical_analysis}
presents the numerical analysis.

\section{A unified newsvendor framework for storage capacity reservation}
\label{sec:unified_framework}

The central methodological contribution of this paper is to represent storage
capacity reservation through a single newsvendor primitive: a pre-committed
capacity \(q\), a random effective opportunity \(\xi\), and a profit function
\(\Pi(q,\xi)\) that captures both excess-capacity and insufficient-capacity
effects. On this primitive, the storage operator faces a finite-candidate
distributionally robust multi-objective decision problem: it wants high
worst-case expected profit, low worst-case tail loss, and low regret over a
finite ambiguity set of plausible opportunity distributions. This section
develops the unified framework and shows how the three distributionally robust
objectives can be formulated and solved in a tractable way.

\subsection{Problem setting and effective opportunity}
\label{subsec:problem_setting}

Consider the running example introduced in the Introduction. A storage operator
decides before an evening operating window how much usable capacity to reserve for
a possible high-value market opportunity. The same notation also applies to energy
arbitrage, ancillary services, imbalance reduction, emergency flexibility or
demand response. Let
\[
q\in[0,Q]
\]
denote the reserved usable capacity, where \(Q>0\) is the physical or market
participation limit. The unit of \(q\) may be MWh for energy capacity or MW for
power capacity, depending on the market product.

The variable \(q\) should be understood as a pre-committed amount of usable storage
resource. For an energy-arbitrage product, \(q\) represents the energy that can be
charged or discharged during the operating window. For a regulation or reserve
product, \(q\) represents the power capacity that is made available to the market.
When the product has a fixed service duration, a power commitment can be converted
into an energy-equivalent commitment by multiplying by that duration. Therefore,
the model uses a single scalar \(q\) to represent the capacity committed before
uncertainty is realized, while allowing the economic parameters below to reflect
the specific market product.

Let \(\xi\ge 0\) denote the random effective opportunity size in the future
operating window. It aggregates market and system conditions into the amount of
capacity that can be profitably used. For example,
\begin{equation}
\xi=\psi(\Delta p,d^{AS},w,l,\tau),
\label{eq:xi_mapping}
\end{equation}
where \(\Delta p\) is a price spread or price-spike intensity, \(d^{AS}\) is
ancillary-service activation demand, \(w\) is renewable output or forecast error,
\(l\) is load, and \(\tau\) represents calendar, weather, or market-state
information. Equation~\eqref{eq:xi_mapping} is a reduced-form mapping: the
function \(\psi(\cdot)\) converts these primitives into an effective capacity
opportunity.

The variable \(\xi\) is therefore not a direct forecast of one physical quantity. It
is a reduced-form representation of the market opportunity that can absorb the
reserved capacity. If \(\xi>q\), the market would have been able to use more
capacity than the operator reserved, and the difference \(\xi-q\) represents a
missed opportunity. If \(\xi<q\), only \(\xi\) units of the reserved capacity are
profitably used, and \(q-\xi\) represents idle or salvageable capacity. In this
sense, the pair \((q,\xi)\) plays the same role as order quantity and demand in a
classical newsvendor model, but \(\xi\) is generated by electricity-market and
system-operation primitives rather than by consumer demand alone.

Throughout the paper, the opportunity distribution is not restricted to the
normal family. In the benchmark models, \(F\) denotes the distribution function
of \(\xi\). This distribution is not assumed to be Gaussian; in particular, it
may be asymmetric or heavy-tailed because storage opportunities are often driven
by price spikes and other tail events. In the robust and regret-based models, the
operator may not know a unique distribution \(F\); instead, the available
information is represented by an ambiguity set \(\cP\) of plausible
distributions. Thus, \(F\) is used when the opportunity distribution is specified,
whereas \(\cP\) is used when distributional ambiguity is explicitly modeled.

\subsection{A flexible NMVM opportunity specification}
\label{subsec:nmvm_opportunity}

The analysis below does not require a unique parametric generator for \(\xi\).
Nevertheless, a normal mean--variance mixture (NMVM) representation is a useful
way to describe skewed and heavy-tailed storage opportunities without imposing
Gaussianity. In this paper, the moment-matched normal distribution is used only
as a benchmark because it summarizes \(\xi\) by \((\E[\xi],\operatorname{Var}(\xi))\).
An NMVM specification is more decision-relevant for storage capacity reservation:
through the mixing variable \(Z\), it can represent low-stress, normal-stress and
scarcity-stress market regimes, as well as continuous tail-thickening
mechanisms, while preserving the nonnegative scale of effective capacity
opportunities after the transformation below. Let \(N\sim N(0,1)\), and let
\(Z\ge 0\) be a nonnegative mixing
variable independent of \(N\). Throughout the NMVM specification, we require the
first two moments of the mixing variable to be finite,
\begin{equation}
\E[Z]<\infty,
\qquad
\E[Z^2]<\infty.
\label{eq:z_moment_condition}
\end{equation}
A latent opportunity driver may be written as
\begin{equation}
Y\overset{d}{=}\mu_{\xi}+\gamma_{\xi}Z+\sigma_{\xi}\sqrt{Z}N,
\qquad \sigma_{\xi}>0,
\label{eq:nmvm_latent}
\end{equation}
and the effective opportunity is generated by a nondecreasing map
\begin{equation}
\xi=g(Y),\qquad g:\R\rightarrow\R_+.
\label{eq:positive_mapping}
\end{equation}
For example, \(g(y)=e^y\) gives a positive opportunity variable, while a bounded
logistic map can impose a physical or market participation limit.
Equations~\eqref{eq:nmvm_latent} and~\eqref{eq:positive_mapping} separate the
latent non-normal driver from the nonnegative storage-opportunity scale.

The choice of the mixing variable \(Z\) is a substantive modeling decision, not
a harmless numerical detail. In an electricity-storage application, \(Z\) should
represent the latent intensity of market stress, price-spike volatility,
ancillary-service activation volatility, or imbalance-settlement stress. Different
choices of \(Z\) imply different skewness and tail behavior. Conditional on
\(Z\), the latent driver is Gaussian; hence, when \(g\) is strictly increasing,
\begin{equation}
F_{\xi}(x)
=
\E\left[
\Phi\left(
\frac{g^{-1}(x)-\mu_{\xi}-\gamma_{\xi}Z}
{\sigma_{\xi}\sqrt{Z}}
\right)
\right],
\label{eq:nmvm_cdf}
\end{equation}
for \(x>0\).

\begin{example}[Finite stress-regime log-NMVM]
\label{ex:discrete_nmvm}
A simple and interpretable special case is a finite stress-regime mixture. Let
\[
Z\in\{z_1,z_2,z_3\},\qquad
\Pr\{Z=z_j\}=p_j,\quad
\sum_{j=1}^{3}p_j=1.
\]
When \(0<z_j<\infty\) for all \(j\), the moment condition
\eqref{eq:z_moment_condition} is automatically satisfied.
The three states can be interpreted as low, normal and high opportunity-stress
regimes. Under the positive mapping \(g(y)=e^y\), the opportunity distribution
becomes a three-component lognormal mixture:
\begin{equation}
F_{\xi}(x)
=
\sum_{j=1}^{3}
p_j
\Phi\left(
\frac{\log x-\mu_{\xi}-\gamma_{\xi}z_j}
{\sigma_{\xi}\sqrt{z_j}}
\right),
\qquad x>0.
\label{eq:discrete_nmvm_cdf}
\end{equation}
Example~\ref{ex:discrete_nmvm} is useful when \(Z\) is interpreted as a small
number of market-stress regimes rather than as a fully specified continuous
mixing law.
\end{example}

If \(M_Z(\cdot)\) denotes the moment generating function of \(Z\),
then, for values of \(t\) in its domain,
\begin{equation}
\E[e^{tY}]
=
e^{t\mu_{\xi}}
M_Z\left(t\gamma_{\xi}+\frac{1}{2}t^2\sigma_{\xi}^2\right).
\label{eq:nmvm_mgf}
\end{equation}
Equations~\eqref{eq:nmvm_cdf} and~\eqref{eq:nmvm_mgf} show how an NMVM
specification can be calibrated once a defensible law for \(Z\) is selected.

\begin{example}[Continuous NMVM candidates: VG and GH]
\label{ex:continuous_nmvm}
Two continuous choices of \(Z\) are especially relevant. If
\(Z\sim \operatorname{Gamma}(a_Z,\theta_Z)\), then \(Y\) has a variance-gamma type
mixture and
\[
M_Z(u)=(1-\theta_Z u)^{-a_Z},\qquad u<1/\theta_Z.
\]
If \(Z\sim \operatorname{GIG}(\lambda,\chi,\psi)\) with
\(\lambda\ne -1/2\), then \(Y\) has a GH mixture. In this case,
\[
M_Z(u)
=
\left(\frac{\psi}{\psi-2u}\right)^{\lambda/2}
\frac{
K_{\lambda}\!\left(\sqrt{\chi(\psi-2u)}\right)
}{
K_{\lambda}\!\left(\sqrt{\chi\psi}\right)
},
\qquad u<\psi/2,
\]
where \(K_{\lambda}(\cdot)\) is the modified Bessel function of the second kind.
For these continuous mixtures, the maintained requirement is the finite-moment
condition in~\eqref{eq:z_moment_condition}. The remaining shape parameters
should be estimated, calibrated, or used in sensitivity analysis rather than
fixed without evidence. The two choices support log-VG and log-GH opportunity distributions
when \(g(y)=e^y\), provided the exponential-moment domain contains the values
needed for the first two moments of \(\xi\)
\citep{McNeilFreyEmbrechts2015,BarndorffNielsen1997,MadanSeneta1990}.
\end{example}

This paper keeps NMVM as the main non-normal specification and treats the choice
of \(Z\) as part of the modeling design. The numerical section therefore compares
degenerate, finite-regime, VG and GH versions of the same NMVM opportunity
framework. The GH candidate used below has \(\lambda=1\); the value
\(\lambda=-1/2\) is not used.

\subsection{A storage newsvendor profit function}
\label{sec:profit_function}

\subsubsection{Economic primitives}
\label{subsec:economic_primitives}

The capacity reservation decision is characterized by the parameters in Table~\ref{tab:notation}.

\begin{table}[H]
\centering
\caption{Main notation.}
\label{tab:notation}
\begin{tabular}{p{0.18\textwidth}p{0.72\textwidth}}
\toprule
Symbol & Meaning \\
\midrule
\(q\) & Reserved or committed storage capacity \\
\(Q\) & Maximum usable capacity \\
\(\xi\) & Random effective opportunity size \\
\(F\) & Distribution function of \(\xi\) \\
\(r\) & Unit net revenue from utilized reserved capacity \\
\(c\) & Unit reservation cost, including degradation or opportunity cost \\
\(s\) & Unit salvage value of unused reserved capacity \\
\(\ell\) & Unit penalty or missed-opportunity loss when capacity is insufficient \\
\(C_o\) & Unit overage cost \\
\(C_u\) & Unit underage cost \\
\bottomrule
\end{tabular}
\end{table}

\begin{assumption}[Integrability and finite risk measures]
\label{ass:integrability}
For every probability distribution \(G\) used to evaluate an objective in this
paper, including the nominal distribution \(F\) and each candidate distribution
in \(\cP\), the effective opportunity satisfies \(\E_G[\xi]<\infty\). Moreover,
for every \(q\in[0,Q]\),
\[
\E_G[|\Pi(q,\xi)|]<\infty,
\qquad
\CVaR_{\alpha,G}(-\Pi(q,\xi))<\infty.
\]
\end{assumption}

\begin{assumption}[Nondegenerate reservation economics]
\label{ass:nondegenerate}
The economic parameters satisfy
\begin{equation*}
r+\ell>c>s.
\end{equation*}
\end{assumption}

Assumption~\ref{ass:integrability} makes all expected-profit, CVaR and regret
objectives below well-defined. Assumption~\ref{ass:nondegenerate} rules out
trivial cases in which reserving no capacity or the maximum capacity is always
optimal independently of uncertainty.

\subsubsection{Profit function}
\label{subsec:profit}

For a capacity decision \(q\) and opportunity realization \(\xi\), define the one-period profit as
\begin{equation}
\Pi(q,\xi)
=
r\min\{q,\xi\}
+s\pos{q-\xi}
-\ell\pos{\xi-q}
-cq.
\label{eq:profit}
\end{equation}
The four terms in~\eqref{eq:profit} represent utilized-capacity revenue, salvage value from unused reserved capacity, missed-opportunity penalty, and reservation cost, respectively.

Equivalently,
\begin{equation*}
\Pi(q,\xi)
=
\begin{cases}
(r+\ell-c)q-\ell \xi, & q\le \xi,\\[3pt]
(r-s)\xi+(s-c)q, & q>\xi.
\end{cases}
\end{equation*}
Thus, for any fixed \(\xi\), \(\Pi(q,\xi)\) is piecewise linear and concave in \(q\). Define the unit overage and underage costs as
\begin{equation*}
C_o=c-s,
\qquad
C_u=r+\ell-c.
\end{equation*}
Under Assumption~\ref{ass:nondegenerate}, \(C_o>0\), \(C_u>0\), and
\begin{equation*}
C_o+C_u=r+\ell-s.
\end{equation*}

\subsection{The finite-candidate distributionally robust tri-objective storage newsvendor problem}
\label{subsec:multi_objective}

The storage operator does not only care about average profit. A policy with high
mean profit may still expose the operator to severe downside losses or perform
poorly when the opportunity distribution is misspecified. We therefore formulate
capacity reservation as a finite-candidate distributionally robust tri-objective
optimization problem rather than as three separate single-objective problems.
The ambiguity set \(\cP\) represents
plausible non-normal opportunity distributions obtained from different fitted
families, sample windows, stress regimes or scenario weights. The mixed-direction
objective is to choose one capacity \(q\) that simultaneously addresses
\begin{equation}
\left\{
\begin{array}{ll}
\displaystyle \max_{0\le q\le Q} &
\displaystyle \inf_{G\in\cP}\E_G[\Pi(q,\xi)], \\[6pt]
\displaystyle \min_{0\le q\le Q} &
\displaystyle \sup_{G\in\cP}\CVaR_{\alpha,G}(-\Pi(q,\xi)), \\[6pt]
\displaystyle \min_{0\le q\le Q} &
\displaystyle \sup_{G\in\cP}
\left\{
V_G^*-\E_G[\Pi(q,\xi)]
\right\}.
\end{array}
\right.
\label{eq:mixed_direction_triobjective}
\end{equation}
Equation~\eqref{eq:mixed_direction_triobjective} is the central decision problem
of this paper. It does not represent three independent optimization problems:
the same capacity \(q\) must be chosen while balancing the three objectives. The
first line protects average economic performance against the least favorable
candidate distribution, the second controls the worst candidate-distribution
tail loss, and the third controls the largest distributional regret. We solve
this mixed-direction problem in three steps. First, we convert it into a common
minimization form. Second, we interpret the resulting vector problem in the
Pareto sense. Third, we compute implementable policies through a scalarized
finite-scenario quadratic program. For the first step, let
\begin{equation*}
J_1(q)
=
-\inf_{G\in\cP}\E_G[\Pi(q,\xi)]
=
\sup_{G\in\cP}\left\{-\E_G[\Pi(q,\xi)]\right\}
\end{equation*}
be the worst-case negative expected profit, so that minimizing \(J_1\) is
equivalent to maximizing the worst-case expected profit. Let
\begin{equation*}
J_2(q)
=
\sup_{G\in\cP}\CVaR_{\alpha,G}(-\Pi(q,\xi))
\end{equation*}
be the worst-case CVaR of the loss \(-\Pi(q,\xi)\). Finally, define the
maximum-regret objective
\begin{equation*}
J_3(q)
=
\sup_{G\in\cP}
\left\{
V_G^*-\E_G[\Pi(q,\xi)]
\right\},
\qquad
V_G^*
=
\max_{0\le z\le Q}\E_G[\Pi(z,\xi)].
\end{equation*}
The mixed-direction problem~\eqref{eq:mixed_direction_triobjective} is therefore
equivalently represented as the tri-objective storage newsvendor problem
\begin{equation}
\min_{0\le q\le Q}
\left(
J_1(q),J_2(q),J_3(q)
\right).
\label{eq:multi_objective_problem}
\end{equation}

Problem~\eqref{eq:multi_objective_problem} should be interpreted in the Pareto
sense. The single-objective distributionally robust expected-profit,
distributionally robust CVaR and maximum-regret models are boundary cases of
this tri-objective formulation.

\begin{definition}[Pareto-efficient reservation decision]
A feasible reservation decision \(q^*\in[0,Q]\) is Pareto efficient for
Problem~\eqref{eq:multi_objective_problem} if there is no other feasible
\(q\in[0,Q]\) such that \(J_i(q)\le J_i(q^*)\) for all \(i=1,2,3\), with at
least one strict inequality.
\end{definition}

In computation, a decision maker can generate robust compromise decisions by
introducing epigraph objective levels \(v=(v_1,v_2,v_3)\) satisfying
\[
v_m\ge J_m(q),\qquad m=1,2,3.
\]
Let \(z_m^{I}\) be an ideal or target value for objective \(m\), and let \(d_m>0\)
be a scaling constant. A standard quadratic achievement scalarization is
\begin{equation}
q^{\lambda}
\in
\argmin
\left\{
\sum_{m=1}^{3}
\lambda_m
\left(\frac{v_m-z_m^{I}}{d_m}\right)^2:
0\le q\le Q,\; v_m\ge J_m(q),\; m=1,2,3
\right\},
\label{eq:quadratic_multiobjective}
\end{equation}
where \(\lambda_m\ge 0\) and \(\sum_{m=1}^{3}\lambda_m=1\). The weights represent
the operator's preference over worst-case mean performance, worst-case tail-risk
protection and regret-based robustness. The scaling constants prevent one
objective from dominating the compromise only because it is measured on a larger
numerical scale. Boundary weights recover the distributionally robust
expected-profit, distributionally robust CVaR, and maximum-regret boundary
policies; interior weights solve the joint
trade-off problem.

\begin{proposition}[Coherent-risk convexity and quadratic scalarization]
\label{prop:multiobjective_convexity}
Under Assumptions~\ref{ass:integrability} and~\ref{ass:nondegenerate},
Problem~\eqref{eq:quadratic_multiobjective} is a convex scalarization of the
finite-candidate distributionally robust tri-objective problem whenever \(z_m^I\) is chosen so that
\(v_m-z_m^I\ge 0\) over the feasible epigraph region. In a finite-scenario
representation, this scalarization can be written as a convex quadratic program.
\end{proposition}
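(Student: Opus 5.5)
The plan is to show that each $J_m$ is convex in $q$ on $[0,Q]$. Then the feasible epigraph region $\{(q,v): 0\le q\le Q,\ v_m\ge J_m(q)\}$ is convex, and the objective in~\eqref{eq:quadratic_multiobjective} is convex on that region. The last point is where the sign condition $v_m-z_m^I\ge 0$ enters: $t\mapsto t^2$ is nondecreasing on $[0,\infty)$, so $\lambda_m((v_m-z_m^I)/d_m)^2$ is a convex function composed with an affine map. Convexity of the square holds on all of $\R$. The sign condition is what makes the scalarization monotone in each $v_m$. Monotonicity guarantees that at an optimum the epigraph constraints bind, $v_m=J_m(q)$. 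It also guarantees that minimizers are not dominated, since lowering any $J_m$ weakly lowers the objective. I will state this as the justification for calling the problem a scalarization of~\eqref{eq:multi_objective_problem}.

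For the convexity of the $J_m$, I will use the concavity of $\Pi(\cdot,\xi)$ for each fixed $\xi$, shown in Section~\ref{subsec:profit}. Under Assumption~\ref{ass:nondegenerate} the two slopes are $C_u>0$ followed by $-C_o<0$, so $\Pi(\cdot,\xi)$ is the minimum of two affine functions. Fix $G\in\cP$. By Assumption~\ref{ass:integrability} the expectation is finite, so $q\mapsto -\E_G[\Pi(q,\xi)]$ is convex. Taking the pointwise supremum over $G$ preserves convexity, which gives convexity of $J_1$.

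For $J_2$, I will use the Rockafellar--Uryasev representation
\[
\CVaR_{\alpha,G}(L)=\min_{t}\Big\{t+\tfrac{1}{1-\alpha}\E_G[(L-t)^+]\Big\}.
\]
Because CVaR is a coherent risk measure, it is monotone and convex (subadditive and positively homogeneous). The map $q\mapsto -\Pi(q,\xi)$ is convex pointwise in $\xi$. Composing a monotone convex risk measure with a pointwise-convex loss gives a convex function of $q$. Directly: $(-\Pi(q,\xi)-t)^+$ is jointly convex in $(q,t)$, so the bracket is jointly convex, and partial minimization over $t$ preserves convexity. Finiteness comes from Assumption~\ref{ass:integrability}. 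Again, the supremum over $G$ preserves convexity.

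For $J_3$, note that $V_G^*$ is a constant independent of $q$, finite by integrability and compactness of $[0,Q]$. Each $V_G^*-\E_G[\Pi(q,\xi)]$ is therefore convex, and so is their supremum.

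For the finite-scenario QP form, take $\cP=\{G_1,\dots,G_K\}$, with each $G_k$ supported on scenarios $\xi_{k1},\dots,\xi_{kN}$ carrying weights $p_{kn}$. I will linearize using auxiliary variables:
\begin{itemize}
\item profit variables $\pi_{kn}\le (r+\ell-c)q-\ell\xi_{kn}$ and $\pi_{kn}\le (r-s)\xi_{kn}+(s-c)q$, both valid upper bounds because $\Pi$ is their minimum;
\item the constraint $v_1\ge -\sum_n p_{kn}\pi_{kn}$ for all $k$;
\item CVaR variables $t_k$ and $u_{kn}\ge 0$ with $u_{kn}\ge -\pi_{kn}-t_k$, together with $v_2\ge t_k+\frac{1}{1-\alpha}\sum_n p_{kn}u_{kn}$;
\item the constraint $v_3\ge V_{G_k}^*-\sum_n p_{kn}\pi_{kn}$, where each $V_{G_k}^*$ is precomputed, for instance by the critical-fractile solution or a scenario LP.
\end{itemize}
All constraints are linear and the objective is a convex, diagonal positive-semidefinite quadratic in $v$, so the result is a convex QP.

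The main subtlety is showing that this lifted QP is exact. The $\pi_{kn}$ enter only as upper bounds, and the $u_{kn}$ and $t_k$ only through minimization. The objective is nondecreasing in each $v_m$ under the sign condition. Hence at an optimum one can push $\pi_{kn}$ up to $\Pi(q,\xi_{kn})$ and $u_{kn}$ down to the positive part without increasing the objective. I expect this monotonicity argument to be the step needing the most care, because if $v_m<z_m^I$ were allowed, slack epigraph variables could lower the objective and break the equivalence.
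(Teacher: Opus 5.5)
Your proposal is correct and follows essentially the paper's route: convexity of each $J_m$ from concavity of $\Pi(\cdot,\xi)$, the Rockafellar--Uryasev representation with partial minimization, pointwise suprema over $\cP$, monotonicity of the squared deviations under $v_m-z_m^I\ge 0$, and a linear lifting with auxiliary variables that gives a convex QP. Two small refinements: exactness of your lifting does not depend on the sign condition, because the projection of the lifted feasible set onto $(q,v)$ is exactly $\{(q,v):0\le q\le Q,\ v_m\ge J_m(q)\}$ and the objective depends only on $v$; and your claim that minimizers are undominated needs strict monotonicity, i.e.\ all $\lambda_m>0$ (with boundary weights you only get weak Pareto efficiency).
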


\begin{proof}
For every realization \(\xi\), the profit function \(\Pi(q,\xi)\) is concave in
\(q\). For each \(G\in\cP\), \(-\E_G[\Pi(q,\xi)]\) is convex, and therefore
the supremum defining \(J_1\) is convex. CVaR is a coherent risk measure; in
particular, it is convex, monotone, translation equivariant and positively
homogeneous. Since the loss \(L(q,\xi)=-\Pi(q,\xi)\) is convex in \(q\), the
Rockafellar--Uryasev representation in Section~\ref{sec:cvar} gives a convex
epigraph for each candidate \(G\), and the supremum over \(G\in\cP\) preserves
convexity. For each \(G\in\cP\), the term \(V_G^*-\E_G[\Pi(q,\xi)]\) is a
constant minus a concave function, hence convex in \(q\). The supremum of these
regret functions is convex. The epigraph constraints \(v_m\ge J_m(q)\) are
therefore convex. Because \(v_m-z_m^I\ge0\), the squared normalized deviations in
\eqref{eq:quadratic_multiobjective} are convex and nondecreasing in the relevant
epigraph variables. In a finite-scenario model, the profit and CVaR epigraphs are
linear after introducing standard auxiliary variables, while the objective is a
positive semidefinite quadratic form. Hence the finite-scenario scalarization is
a convex quadratic program.
\end{proof}

Proposition~\ref{prop:multiobjective_convexity} justifies using scalarization to
generate Pareto-efficient candidates without leaving the class of convex
capacity-reservation problems.

The following subsections study the building blocks of the finite-candidate
distributionally robust model. The single-distribution expected-profit problem
gives a critical-fractile benchmark, the CVaR component gives a convex tail-risk
representation, and the regret component gives a robustness criterion against
candidate distributions. These building blocks are then assembled into the
finite-scenario solution method in Section~\ref{sec:finite_scenario_solution}.

\subsection{Distributionally robust expected-profit component and its single-distribution benchmark}
\label{sec:expected_profit}

The first objective of the finite-candidate distributionally robust model is to
maximize worst-case expected profit,
\[
\max_{0\le q\le Q}\inf_{G\in\cP}\E_G[\Pi(q,\xi)].
\]
To obtain a tractable building block for this distributionally robust objective,
first consider a single candidate distribution \(G\). The distribution-specific expected-profit
benchmark chooses \(q\) to maximize
\begin{equation*}
q^{EP}(G)
\in
\argmax_{0\le q\le Q}
\E_G[\Pi(q,\xi)].
\end{equation*}
Using~\eqref{eq:profit}, the objective can be written as
\begin{equation*}
\max_{0\le q\le Q}
\left\{
r\E_G[\min\{q,\xi\}]
+s\E_G[\pos{q-\xi}]
-\ell\E_G[\pos{\xi-q}]
-cq
\right\}.
\end{equation*}
The finite-candidate distributionally robust model in
Section~\ref{sec:finite_scenario_solution}
uses these distribution-specific expected-profit functions inside the epigraph
constraint for worst-case expected profit.

\begin{proposition}[Distribution-specific critical-fractile solution]
\label{prop:critical_fractile}
For \(u\in(0,1)\), let
\[
F^{-1}(u)=\inf\{x\ge 0:F(x)\ge u\}
\]
denote the generalized quantile of a candidate opportunity distribution \(F\).
Suppose Assumptions~\ref{ass:integrability} and~\ref{ass:nondegenerate} hold and \(F\) is continuous. If the unconstrained optimizer is interior, then the expected-profit optimal reservation quantity satisfies
\begin{equation}
F(q^{EP})
=
\frac{r+\ell-c}{r+\ell-s}
=
\frac{C_u}{C_u+C_o}.
\label{eq:critical_fractile}
\end{equation}
With the capacity constraint \(q\in[0,Q]\), an optimal solution is
\begin{equation}
q^{EP}
=
\operatorname{Proj}_{[0,Q]}
\left[
F^{-1}
\left(
\frac{C_u}{C_u+C_o}
\right)
\right].
\label{eq:ep_projected_solution}
\end{equation}
\end{proposition}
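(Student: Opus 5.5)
Following the classical newsvendor argument, the plan is to reduce the expected profit to a concave function of \(q\) whose one-sided derivatives are explicit in \(F\). Write \(\pi(q)=\E_F[\Pi(q,\xi)]\). Using \(\min\{q,\xi\}=\xi-\pos{\xi-q}\) and \(\pos{q-\xi}=q-\xi+\pos{\xi-q}\), rewrite \eqref{eq:profit} as
\[
\Pi(q,\xi)=(r-s)\xi+(s-c)q-(r+\ell-s)\pos{\xi-q}.
\]
Taking expectations, which is legitimate by Assumption~\ref{ass:integrability} since \(\E_F[\xi]<\infty\), gives
\[
\pi(q)=(r-s)\E_F[\xi]-C_o\,q-(C_u+C_o)\,\E_F\bigl[\pos{\xi-q}\bigr].
\]
Concavity of \(\pi\) follows either from this representation, because \(q\mapsto\pos{\xi-q}\) is convex, or from the pointwise concavity of \(\Pi(\cdot,\xi)\) already noted after \eqref{eq:profit}.

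Next I would differentiate. By Fubini, \(\E_F[\pos{\xi-q}]=\int_q^\infty(1-F(x))\,dx\), and this function is finite. Its right derivative is \(-(1-F(q))\) and its left derivative is \(-(1-F(q^-))\). Since \(F\) is continuous, the two coincide, so \(\pi\) is differentiable with
\[
\pi'(q)=-C_o+(C_u+C_o)\bigl(1-F(q)\bigr)=C_u-(C_u+C_o)F(q).
\]
This derivative is nonincreasing in \(q\), which confirms concavity. Under Assumption~\ref{ass:nondegenerate} we have \(C_u,C_o>0\), so \(\beta:=C_u/(C_u+C_o)\in(0,1)\). At an interior unconstrained optimizer the first-order condition \(\pi'(q)=0\) holds, and this is exactly \eqref{eq:critical_fractile}.

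For the constrained solution \eqref{eq:ep_projected_solution}, set \(q^\circ=F^{-1}(\beta)\). Continuity of \(F\) gives \(F(q^\circ)=\beta\). Moreover \(F(q)<\beta\) for \(q<q^\circ\) by the definition of the infimum, and \(F(q)\ge\beta\) for \(q\ge q^\circ\). Hence \(\pi'\) is positive on \([0,q^\circ)\) and nonpositive on \([q^\circ,\infty)\), so \(q^\circ\) is a global maximizer of \(\pi\) over \(\R_+\). A concave function that is nondecreasing below \(q^\circ\) and nonincreasing above it is maximized over \([0,Q]\) at the projection of \(q^\circ\). Concretely:
\begin{itemize}
\item if \(q^\circ>Q\), then \(\pi\) is increasing on \([0,Q]\) and the maximizer is \(Q\);
\item if \(q^\circ\le Q\), the maximizer is \(q^\circ\) itself.
\end{itemize}
Because \(q^\circ\ge0\) by the definition of \(F^{-1}\), the lower bound never binds.

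The main obstacle is the careful justification of the derivative, namely interchanging differentiation and expectation without assuming a density. I would handle it through the integral-of-survival-function representation above, which needs only \(\E_F[\xi]<\infty\) and continuity of \(F\). An alternative is a dominated-convergence argument on the difference quotients, which are bounded by \(1\) in absolute value. A secondary subtlety is that \(F\) may have flat stretches, so the maximizer need not be unique. This is why the statement asserts only that \(q^{EP}\) is \emph{an} optimal solution, and the generalized-quantile choice selects the smallest one.
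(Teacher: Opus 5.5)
Your proposal is correct and takes the same route as the paper: derive $C_u-(C_u+C_o)F(q)$ as the derivative of the expected profit, set it to zero at an interior optimum, use concavity, and project onto $[0,Q]$. Your version fills in steps the paper leaves implicit. You justify differentiability without a density via $\E_F[\pos{\xi-q}]=\int_q^\infty(1-F(x))\,dx$ and continuity of $F$, and you verify the projection step from the sign pattern of the derivative around $F^{-1}\bigl(C_u/(C_u+C_o)\bigr)$, whereas the paper simply asserts it.
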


\begin{proof}
For points at which the derivative exists,
\[
\frac{d}{dq}\E_F[\Pi(q,\xi)]
=
(r+\ell-c)-(r+\ell-s)F(q).
\]
Setting the derivative to zero yields~\eqref{eq:critical_fractile}. Concavity follows from the piecewise linear concavity of \(\Pi(q,\xi)\) in \(q\), and the constrained solution is obtained by projection onto \([0,Q]\).
\end{proof}

Proposition~\ref{prop:critical_fractile} is the benchmark result against which
the CVaR, regret and numerical policies are compared.

\begin{remark}[Role of non-normality]
Equation~\eqref{eq:ep_projected_solution} shows that the newsvendor model itself does not require normality. Non-normality affects the decision through the quantile function \(F^{-1}\). Asymmetry and heavy tails can shift the relevant critical quantile and generate capacity misallocation under a normal approximation.
\end{remark}

\begin{proposition}[Comparative statics of the expected-profit policy]
\label{prop:comparative_statics}
Suppose Assumption~\ref{ass:nondegenerate} holds and \(F\) is continuous
and strictly increasing around the critical fractile.
Let
\[
\theta=\frac{r+\ell-c}{r+\ell-s}.
\]
Then the interior expected-profit solution \(q^{EP}=F^{-1}(\theta)\) is nondecreasing
in \(r\), \(\ell\), and \(s\), and nonincreasing in \(c\). Moreover, let
\(X_j\) have distribution function \(F_j\), \(j=1,2\). If
\(X_2\succeq_{\mathrm{FSD}}X_1\), equivalently \(F_2(x)\le F_1(x)\) for all \(x\), then
\[
q^{EP}(F_2)\ge q^{EP}(F_1).
\]
\end{proposition}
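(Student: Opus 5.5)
The argument has two parts: the monotonicity of \(q^{EP}=F^{-1}(\theta)\) in the parameters, and the stochastic-dominance comparison. In both, Proposition~\ref{prop:critical_fractile} reduces the policy to the generalized quantile evaluated at the critical fractile. Since \(F^{-1}\) defined there is nondecreasing in its argument, the first part reduces to showing that \(\theta\) moves in the right direction. For the second part we compare the two quantile functions at the same fraction \(\theta\).

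\textbf{Part 1: monotonicity in the parameters.} Write
\[
\theta = 1-\frac{c-s}{r+\ell-s} = \frac{C_u}{C_u+C_o}.
\]
Assumption~\ref{ass:nondegenerate} gives \(C_o>0\) and \(C_u>0\), so \(\theta\in(0,1)\) and every denominator below is positive.

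\emph{In \(r\) and \(\ell\).} These parameters enter only through \(S=r+\ell\). We have
\[
\frac{\partial\theta}{\partial S}=\frac{c-s}{(S-s)^2}>0 .
\]

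\emph{In \(c\).} Here \(\partial\theta/\partial c = -1/(r+\ell-s)<0\).

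\emph{In \(s\).} Differentiating \(\theta=(S-c)/(S-s)\) gives
\[
\frac{\partial\theta}{\partial s}=\frac{S-c}{(S-s)^2}>0 .
\]

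Each parameter is varied within the region where Assumption~\ref{ass:nondegenerate} holds, so \(\theta\) stays in \((0,1)\). Composing with the nondecreasing map \(u\mapsto F^{-1}(u)\) then gives the stated directions for the interior solution.

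The main subtlety is the interior requirement. The hypothesis that \(F\) is continuous and strictly increasing near the fractile guarantees that \(F^{-1}(\theta)\) is the unique solution of \(F(q)=\theta\) and that it moves with \(\theta\). If we also want the projected solution~\eqref{eq:ep_projected_solution}, monotonicity still holds, because \(\operatorname{Proj}_{[0,Q]}\) is itself nondecreasing.

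\textbf{Part 2: the FSD comparison.} Suppose \(F_2(x)\le F_1(x)\) for all \(x\). For any \(u\in(0,1)\), set inclusion gives
\[
\{x\ge0: F_2(x)\ge u\}\subseteq\{x\ge0: F_1(x)\ge u\},
\]
since \(F_2(x)\ge u\) implies \(F_1(x)\ge F_2(x)\ge u\). The infimum over the smaller set is at least as large, so \(F_2^{-1}(u)\ge F_1^{-1}(u)\). Taking \(u=\theta\), which is the same for both distributions because the economic parameters are fixed, yields \(q^{EP}(F_2)\ge q^{EP}(F_1)\). The same inequality carries over to the projected solutions by monotonicity of the projection.

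I do not expect a genuine obstacle. The only point needing care is to work with the generalized quantile, so that no strict monotonicity of \(F_j\) is required in Part 2, and to note that the parameters stay in the nondegenerate region in Part 1.
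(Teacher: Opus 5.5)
Your proposal is correct and follows essentially the same route as the paper: you compute the signs of the partial derivatives of \(\theta\), compose with the nondecreasing generalized quantile, and transfer the pointwise CDF ordering to the quantiles. Your set-inclusion argument supplies the justification for \(F_2^{-1}(\theta)\ge F_1^{-1}(\theta)\), which the paper only asserts, and your remark that \(\operatorname{Proj}_{[0,Q]}\) is monotone covers the case in which the solution is no longer interior.
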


\begin{proof}
The partial derivatives of \(\theta\) are
\[
\frac{\partial \theta}{\partial r}
=
\frac{c-s}{{(r+\ell-s)}^2}>0,\qquad
\frac{\partial \theta}{\partial \ell}
=
\frac{c-s}{{(r+\ell-s)}^2}>0,
\]
\[
\frac{\partial \theta}{\partial c}
=
-\frac{1}{r+\ell-s}<0,\qquad
\frac{\partial \theta}{\partial s}
=
\frac{r+\ell-c}{{(r+\ell-s)}^2}>0.
\]
Since \(F^{-1}\) is nondecreasing, the first claim follows. For the second
claim, \(X_2\succeq_{\mathrm{FSD}}X_1\) means that
\[
F_2(x)\le F_1(x),\qquad \forall x.
\]
Therefore the generalized quantiles satisfy
\[
F_2^{-1}(\theta)\ge F_1^{-1}(\theta),\qquad \forall \theta\in(0,1).
\]
\end{proof}

Proposition~\ref{prop:comparative_statics} clarifies how storage-market economic
parameters move the expected-profit capacity before risk and regret criteria are
introduced.

\subsection{CVaR tail-risk component}
\label{sec:cvar}

Expected-profit maximization may expose the operator to severe tail losses. Define the loss associated with decision \(q\) as
\begin{equation*}
L(q,\xi)=-\Pi(q,\xi).
\end{equation*}
For a confidence level \(\alpha\in(0,1)\) and a candidate distribution
\(G\in\cP\), the conditional value-at-risk of \(L(q,\xi)\) is
\begin{equation*}
\CVaR_{\alpha,G}(L(q,\xi))
=
\min_{\eta\in\R}
\left\{
\eta+
\frac{1}{1-\alpha}
\E_G[\pos{L(q,\xi)-\eta}]
\right\}.
\end{equation*}

\subsubsection{Distributionally robust CVaR model}
\label{subsec:pure_cvar}

The distributionally robust risk-averse boundary policy solves
\begin{equation*}
q^{RCVaR}
\in
\argmin_{0\le q\le Q}
\sup_{G\in\cP}\CVaR_{\alpha,G}(L(q,\xi)).
\end{equation*}
For a single reference distribution \(F\), this reduces to the standard
Rockafellar--Uryasev reformulation
\begin{equation}
\min_{0\le q\le Q,\,\eta\in\R}
\left\{
\eta+
\frac{1}{1-\alpha}
\E_F[\pos{-\Pi(q,\xi)-\eta}]
\right\}.
\label{eq:pure_cvar_reform}
\end{equation}
CVaR is used here because it is a coherent risk measure. Its convexity and
positive homogeneity make tail-risk control compatible with robust optimization,
while its Rockafellar--Uryasev representation converts the tail expectation into
linear epigraph constraints under a finite scenario representation.

\begin{proposition}[Convexity of the CVaR formulation]
\label{prop:cvar_convexity}
Under Assumptions~\ref{ass:integrability} and~\ref{ass:nondegenerate}, problem~\eqref{eq:pure_cvar_reform} is a convex optimization problem in \((q,\eta)\).
\end{proposition}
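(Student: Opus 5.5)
The plan is to show that the objective of \eqref{eq:pure_cvar_reform},
\[
\Phi(q,\eta)=\eta+\frac{1}{1-\alpha}\E_F\bigl[\pos{-\Pi(q,\xi)-\eta}\bigr],
\]
is a finite-valued, jointly convex function on the convex feasible set \([0,Q]\times\R\). Convexity will be established pointwise in \(\xi\) and then transferred through the expectation.

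\emph{Step 1 (joint convexity of the loss).} I would show that \(L(q,\xi)=-\Pi(q,\xi)\) is convex in \(q\) for each fixed \(\xi\). From the piecewise representation of \(\Pi\) in Section~\ref{subsec:profit},
\[
L(q,\xi)=\max\bigl\{\ell\xi-C_u q,\;-(r-s)\xi+C_o q\bigr\}.
\]
This is a maximum of two affine functions of \(q\). The two pieces agree at \(q=\xi\), and the slope jumps from \(-C_u<0\) to \(C_o>0\) by Assumption~\ref{ass:nondegenerate}. I will verify that this max form equals the piecewise formula on both sides of \(\xi\). Since \(L\) is convex in \(q\) and \(\eta\) enters affinely, \(L(q,\xi)-\eta\) is jointly convex in \((q,\eta)\).

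\emph{Step 2 (composition).} The map \(t\mapsto t^+\) is convex and nondecreasing. Composing it with the jointly convex function \(L(q,\xi)-\eta\) gives a jointly convex function. Concretely, for \(\theta\in[0,1]\) and two points \((q_1,\eta_1)\), \((q_2,\eta_2)\),
\[
\bigl(L(\theta q_1+(1-\theta)q_2,\xi)-\theta\eta_1-(1-\theta)\eta_2\bigr)^+ \le \theta\bigl(L(q_1,\xi)-\eta_1\bigr)^+ + (1-\theta)\bigl(L(q_2,\xi)-\eta_2\bigr)^+ .
\]
This inequality holds pointwise for every \(\xi\).

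\emph{Step 3 (expectation and finiteness).} This is the step needing care. Assumption~\ref{ass:integrability} gives \(\E_F|\Pi(q,\xi)|<\infty\) for every \(q\in[0,Q]\). Since \(\pos{L-\eta}\le|L|+|\eta|\), the expectation is finite for every \((q,\eta)\), so no \(+\infty\) values or ill-defined differences arise. Taking \(\E_F\) of the pointwise inequality from Step 2 preserves it, by monotonicity and linearity of expectation. Multiplying by \(1/(1-\alpha)>0\) and adding the affine term \(\eta\) keeps \(\Phi\) jointly convex.

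\emph{Step 4 (conclusion).} The feasible set \([0,Q]\times\R\) is convex, so \eqref{eq:pure_cvar_reform} is the minimization of a convex function over a convex set, which is a convex program. I would also remark that minimizing out \(\eta\) (partial minimization of a jointly convex function) shows \(q\mapsto\CVaR_{\alpha,F}(L(q,\xi))\) is convex. That is the fact used in Proposition~\ref{prop:multiobjective_convexity}.
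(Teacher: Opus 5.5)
Your proposal is correct and follows the paper's own route: convexity of the loss \(L(q,\xi)=-\Pi(q,\xi)\) in \(q\), composition with the convex nondecreasing positive part to get joint convexity in \((q,\eta)\), and preservation of convexity under expectation and addition of the linear term \(\eta\). Your explicit max-of-two-affine-functions check of \(L\) and your finiteness argument via Assumption~\ref{ass:integrability} are added rigor that the paper leaves implicit, but the argument is the same.
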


\begin{proof}
For every \(\xi\), \(\Pi(q,\xi)\) is concave in \(q\). Hence \(L(q,\xi)=-\Pi(q,\xi)\) is convex in \(q\). The positive-part function is convex and nondecreasing, so \(\pos{L(q,\xi)-\eta}\) is convex in \((q,\eta)\). Taking expectation and adding the linear term \(\eta\) preserves convexity.
\end{proof}

Proposition~\ref{prop:cvar_convexity} shows that adding tail-risk control does
not destroy tractability.

\subsubsection{Sample-average CVaR formulation}
\label{subsec:saa_cvar}

Given samples \(\xi_1,\ldots,\xi_n\), the sample-average approximation of~\eqref{eq:pure_cvar_reform} is
\begin{align}
\min_{q,\eta,u_1,\ldots,u_n}
\quad&
\eta+\frac{1}{(1-\alpha)n}\sum_{i=1}^{n}u_i
\label{eq:saa_cvar}\\
\text{s.t.}\quad&
u_i\ge -\Pi(q,\xi_i)-\eta,\quad i=1,\ldots,n,
\nonumber\\
&u_i\ge 0,\quad i=1,\ldots,n,
\nonumber\\
&0\le q\le Q.
\nonumber
\end{align}
Because \(-\Pi(q,\xi_i)\) is convex and piecewise linear, formulation~\eqref{eq:saa_cvar} is a convex program with one primary capacity variable and auxiliary CVaR variables.

\subsection{Maximum-regret robustness component}
\label{sec:maximum_regret}

Worst-case expected profit and worst-case CVaR protect absolute performance over
the ambiguity set. Regret adds a different robust criterion: it compares a
chosen capacity with the distribution-specific or scenario-specific best
capacity. This is useful in storage markets because price-spike patterns and
activation patterns can change, so a policy that is good for one opportunity
distribution may be far from optimal for another.

\subsubsection{Scenario maximum regret}
\label{subsec:scenario_regret}

Suppose first that only the support
\[
\xi\in[\underline{\xi},\overline{\xi}]
\]
is known. For a realized opportunity \(\xi\), the ex post optimal capacity is \(q^*(\xi)=\min\{\xi,Q\}\). The scenario regret of choosing \(q\) is
\begin{equation*}
R(q,\xi)
=
\Pi(q^*(\xi),\xi)-\Pi(q,\xi).
\end{equation*}
The scenario maximum-regret decision is
\begin{equation*}
q^{SR}
\in
\argmin_{0\le q\le Q}
\sup_{\xi\in[\underline{\xi},\overline{\xi}]}
R(q,\xi).
\end{equation*}

\begin{proposition}[Closed form under interval uncertainty]
\label{prop:scenario_regret_closed_form}
Under Assumption~\ref{ass:nondegenerate}, if \(Q\ge \overline{\xi}\), then
\begin{equation}
R(q,\xi)=C_u\pos{\xi-q}+C_o\pos{q-\xi}.
\label{eq:regret_piecewise_cost}
\end{equation}
The optimal scenario maximum-regret decision is
\begin{equation}
q^{SR}
=
\frac{C_o\underline{\xi}+C_u\overline{\xi}}{C_o+C_u}.
\label{eq:sr_closed_form}
\end{equation}
\end{proposition}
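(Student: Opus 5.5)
The argument has two parts. First we compute the regret function explicitly. Then we minimize its supremum over the interval, which is a one-dimensional minimax of a piecewise-linear convex function.

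\emph{Step 1: the regret formula.} Because \(Q\ge\overline{\xi}\ge\xi\), the ex post optimal capacity is \(q^*(\xi)=\xi\). Substituting into \eqref{eq:profit} gives \(\Pi(\xi,\xi)=(r-c)\xi\). We then split on the sign of \(\xi-q\) and use the piecewise form of \(\Pi\).
If \(q\le\xi\), then
\[
R(q,\xi)=(r-c)\xi-(r+\ell-c)q+\ell\xi=(r+\ell-c)(\xi-q)=C_u(\xi-q).
\]
If \(q>\xi\), then
\[
R(q,\xi)=(r-c)\xi-(r-s)\xi-(s-c)q=(c-s)(q-\xi)=C_o(q-\xi).
\]
Combining the two cases yields \eqref{eq:regret_piecewise_cost}. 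By Assumption~\ref{ass:nondegenerate}, \(C_o,C_u>0\), so \(R(\cdot,\xi)\) is nonnegative and convex in \(q\).

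\emph{Step 2: the worst case over the support.} For fixed \(q\), the map \(\xi\mapsto R(q,\xi)\) is convex and piecewise linear in \(\xi\). Its supremum over \([\underline{\xi},\overline{\xi}]\) is therefore attained at an endpoint, so
\[
\sup_{\xi}R(q,\xi)=\max\{C_u\pos{\overline{\xi}-q},\,C_o\pos{q-\underline{\xi}}\}.
\]
Restricting to \(q\in[\underline{\xi},\overline{\xi}]\), this becomes \(\max\{C_u(\overline{\xi}-q),C_o(q-\underline{\xi})\}\). The first term is strictly decreasing in \(q\) and the second strictly increasing. The maximum is thus minimized where the two terms are equal, namely \(C_u(\overline{\xi}-q)=C_o(q-\underline{\xi})\), which gives \eqref{eq:sr_closed_form}.

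\emph{Step 3: ruling out the boundary.} We must show that choices outside \([\underline{\xi},\overline{\xi}]\) cannot do better.
For \(q<\underline{\xi}\), the worst-case regret is \(C_u(\overline{\xi}-q)\), which exceeds its value \(C_u(\overline{\xi}-\underline{\xi})\) at \(q=\underline{\xi}\).
For \(q>\overline{\xi}\), the argument is symmetric.
The point \(q^{SR}\) is a convex combination of \(\underline{\xi}\) and \(\overline{\xi}\), so it lies in \([0,\overline{\xi}]\subseteq[0,Q]\) and is feasible.

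The only delicate points are the endpoint argument for the supremum and the feasibility check. The feasibility check uses \(Q\ge\overline{\xi}\) and \(\underline{\xi}\ge0\), the latter because \(\xi\ge0\) throughout the paper. The rest is direct computation.
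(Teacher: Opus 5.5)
Your proof is correct and follows the same route as the paper. You use \(Q\ge\overline{\xi}\) to get \(q^*(\xi)=\xi\), compute the two-case regret, reduce the worst case to the endpoints, and equalize \(C_u(\overline{\xi}-q)\) with \(C_o(q-\underline{\xi})\); your endpoint-convexity argument, the treatment of \(q\notin[\underline{\xi},\overline{\xi}]\) and the feasibility check \(q^{SR}\in[\underline{\xi},\overline{\xi}]\subseteq[0,Q]\) only spell out what the paper's shorter proof leaves implicit.
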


\begin{proof}
When \(Q\ge\overline{\xi}\), the ex post best decision is \(q^*(\xi)=\xi\). Comparing \(\Pi(\xi,\xi)\) with \(\Pi(q,\xi)\) yields~\eqref{eq:regret_piecewise_cost}. The worst-case regret is
\[
\max\{C_o(q-\underline{\xi}),\, C_u(\overline{\xi}-q)\}.
\]
The minimizer equalizes these two terms, which gives~\eqref{eq:sr_closed_form}.
\end{proof}

Proposition~\ref{prop:scenario_regret_closed_form} is stated under
\(Q\ge\overline{\xi}\) to keep the expression transparent. If
\(Q<\overline{\xi}\), define the truncated opportunity
\(\tilde{\xi}=\min\{\xi,Q\}\). The same support-only calculation applies to
\(\tilde{\xi}\). When \(Q\ge \underline{\xi}\), the support endpoints become
\(\underline{\xi}\) and \(\min\{\overline{\xi},Q\}\); if
\(Q<\underline{\xi}\), the truncated opportunity is degenerate at \(Q\).
These endpoints replace \(\underline{\xi}\) and \(\overline{\xi}\)
in~\eqref{eq:sr_closed_form}.

\subsubsection{Distributional maximum regret}
\label{subsec:distributional_regret}

Let \(\cP\) be a set of plausible distributions for the non-normal opportunity variable. For any \(P\in\cP\), define the distribution-specific optimal value
\begin{equation*}
V^*(P)
=
\max_{0\le z\le Q}\E_P[\Pi(z,\xi)].
\end{equation*}
The regret of decision \(q\) under distribution \(P\) is
\begin{equation*}
\mathcal{R}(q;P)
=
V^*(P)-\E_P[\Pi(q,\xi)].
\end{equation*}
The distributional maximum-regret policy solves
\begin{equation*}
q^{MR}
\in
\argmin_{0\le q\le Q}
\sup_{P\in\cP}
\left[
\max_{0\le z\le Q}\E_P[\Pi(z,\xi)]
-\E_P[\Pi(q,\xi)]
\right].
\end{equation*}
This formulation follows the minimax-regret logic of partial-information newsvendor models~\citep{PerakisRoels2008}, but adapts it to storage-capacity reservation.

\subsubsection{Candidate non-normal distributions}
\label{subsec:candidate_distributions}

The ambiguity set \(\cP\) can be specified in several ways:
\begin{enumerate}[label = (\roman*)]
\item \textit{Parametric NMVM candidates}: degenerate NMVM, finite-regime NMVM
mixtures, VG-NMVM, GH-NMVM, or empirical NMVM scenario mixtures.
\item \textit{Moment and tail information}: distributions satisfying constraints on mean, variance, support, tail quantiles, skewness direction, or tail probability.
\item \textit{Data-driven neighborhoods}: distributions close to an empirical distribution \(\widehat P_n\) under Wasserstein distance, CDF bands, stochastic dominance bands, or moment confidence sets.
\end{enumerate}

\subsubsection{Finite-candidate formulation}
\label{subsec:finite_candidate_mr}

If \(\cP=\{P_1,\ldots,P_K\}\), compute
\begin{equation*}
V_k^*
=
\max_{0\le z\le Q}\E_{P_k}[\Pi(z,\xi)],
\qquad k=1,\ldots,K.
\end{equation*}
The maximum-regret problem becomes
\begin{align}
\min_{q,t}\quad & t
\label{eq:finite_mr}\\
\text{s.t.}\quad&
t\ge V_k^*-\E_{P_k}[\Pi(q,\xi)],
\quad k=1,\ldots,K,
\nonumber\\
&0\le q\le Q.
\nonumber
\end{align}

\begin{proposition}[Convexity of finite-candidate maximum regret]
\label{prop:finite_mr_convexity}
Under Assumption~\ref{ass:integrability}, problem~\eqref{eq:finite_mr} is a two-variable epigraph convex program with one primary capacity variable.
\end{proposition}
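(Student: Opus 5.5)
The plan is to verify three things in turn. First, each constraint of \eqref{eq:finite_mr} is well defined and finite. Second, each constraint function is jointly convex in \((q,t)\). Third, the feasible set is convex and the objective is linear, which makes \eqref{eq:finite_mr} an epigraph convex program in the two variables \((q,t)\), with \(q\) the only primary decision variable.

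For well-definedness, Assumption~\ref{ass:integrability} gives \(\E_{P_k}[|\Pi(q,\xi)|]<\infty\) for every \(q\in[0,Q]\) and every \(k\). So each function \(\phi_k(q)=\E_{P_k}[\Pi(q,\xi)]\) is finite on \([0,Q]\). To see that \(V_k^*\) is a finite number that is attained, I would show that \(\phi_k\) is continuous on the compact interval \([0,Q]\). From \eqref{eq:profit}, \(\Pi(\cdot,\xi)\) is Lipschitz in \(q\) with a constant that does not depend on \(\xi\), namely \(\max\{|r+\ell-c|,|s-c|\}\). Hence \(|\phi_k(q)-\phi_k(q')|\le \max\{|r+\ell-c|,|s-c|\}\,|q-q'|\), which also avoids any dominated-convergence argument. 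A continuous function on a compact interval attains its maximum, so \(V_k^*\) is a finite constant computed before \eqref{eq:finite_mr} is solved.

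For convexity, I would use the fact noted in Section~\ref{subsec:profit} that \(\Pi(\cdot,\xi)\) is concave for every fixed \(\xi\). It is the minimum of the two affine pieces \((r+\ell-c)q-\ell\xi\) and \((r-s)\xi+(s-c)q\); the minimum is the right branch on each side because the two pieces agree at \(q=\xi\) and their slopes are ordered, \(r+\ell-c\ge s-c\) (this holds since \(r+\ell>s\), for instance under Assumption~\ref{ass:nondegenerate}). Taking expectations preserves concavity, so \(\phi_k\) is concave and \(V_k^*-\phi_k(q)\) is convex. Therefore each constraint \(t-V_k^*+\phi_k(q)\ge 0\) says that a concave function of \((q,t)\) is nonnegative, and its feasible region is convex. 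The box \(0\le q\le Q\) is convex, a finite intersection of convex sets is convex, and the objective \(t\) is linear. By construction the optimal \(t\) equals \(\max_k\{V_k^*-\phi_k(q)\}\), the maximum regret \(J_3(q)\) restricted to \(\cP\). So \eqref{eq:finite_mr} is the epigraph form of minimizing a pointwise maximum of \(K\) convex functions.

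The main obstacle is small. Proposition~\ref{prop:finite_mr_convexity} assumes only Assumption~\ref{ass:integrability}, whereas the slope ordering used above comes most naturally from Assumption~\ref{ass:nondegenerate}. I would handle this by observing that concavity of \(\Pi(\cdot,\xi)\) needs only \(r+\ell\ge s\). If that inequality is not taken as standing, I would state it explicitly, noting that it is implied by Assumption~\ref{ass:nondegenerate}. Finally, I would add a remark that the problem is feasible and bounded below: any \(q\) together with \(t\) large enough is feasible, and every regret \(V_k^*-\phi_k(q)\) is nonnegative. It follows that an optimal solution exists, since the value function \(J_3(q)\) is continuous on the compact set \([0,Q]\).
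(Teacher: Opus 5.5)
Your proof is correct and takes essentially the same route as the paper's: concavity of \(\E_{P_k}[\Pi(q,\xi)]\) in \(q\) makes each regret \(V_k^*-\E_{P_k}[\Pi(q,\xi)]\) convex, and the epigraph of the pointwise maximum of these finitely many convex functions is convex. Your extra checks are valid refinements of details the paper's three-line proof takes for granted: that \(V_k^*\) is finite and attained (via the uniform Lipschitz bound), and that concavity of \(\Pi(\cdot,\xi)\) needs \(r+\ell\ge s\), which comes from Assumption~\ref{ass:nondegenerate} rather than from Assumption~\ref{ass:integrability} alone.
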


\begin{proof}
For each \(k\), \(\E_{P_k}[\Pi(q,\xi)]\) is concave in \(q\), so \(V_k^*-\E_{P_k}[\Pi(q,\xi)]\) is convex. The pointwise maximum of finitely many convex functions is convex, and the epigraph formulation~\eqref{eq:finite_mr} preserves convexity.
\end{proof}

Proposition~\ref{prop:finite_mr_convexity} provides the finite-candidate robust
counterpart to the closed-form interval benchmark.

If each \(P_k\) is represented by weighted scenarios
\(\left\{(\xi_i,p_{ki}):i=1,\ldots,n\right\}\), then
\begin{equation*}
\E_{P_k}[\Pi(q,\xi)]
=
\sum_{i=1}^{n}p_{ki}\Pi(q,\xi_i),
\end{equation*}
and~\eqref{eq:finite_mr} can be solved directly from sampled or bootstrapped distributions.

\subsection{Sample-based construction of finite-candidate ambiguity sets}
\label{sec:sample_based}

In an empirical implementation, observations or simulated scenarios
\(\xi_1,\ldots,\xi_n\) are not used only to form one empirical distribution.
They are used to construct a finite ambiguity set
\[
\cP_n=\{P_1,\ldots,P_K\}.
\]
The candidates may correspond to different fitted non-normal families, different
rolling sample windows, bootstrap reweightings, or stress distributions for
price-spike and activation regimes. Each candidate \(P_k\) is represented on a
common scenario support by probability weights \(p_{ki}\). This construction
keeps the solution distributionally robust in the finite-candidate sense: the
capacity is chosen against all candidate distributions rather than against a
single empirical CDF.

\subsection{Solution method for the scalarized finite-candidate distributionally robust model}
\label{sec:finite_scenario_solution}

The three distributionally robust objective components can be solved jointly once
the opportunity distribution is represented by a finite candidate set on a common
scenario support. Let
\(\left\{\xi_i:i=1,\ldots,n\right\}\) be a common scenario set. Each candidate
distribution \(P_k\in\cP\), \(k=1,\ldots,K\), is represented by probability
weights \(\left\{p_{ki}:i=1,\ldots,n\right\}\):
\[
p_{ki}\ge 0,\quad \sum_{i=1}^{n}p_{ki}=1,\quad k=1,\ldots,K.
\]
For every
candidate \(P_k\), first compute its distribution-specific best value
\begin{equation*}
V_k^*
=
\max_{0\le z\le Q}
\sum_{i=1}^{n}p_{ki}\Pi(z,\xi_i).
\end{equation*}
For each scenario \(i\), write the loss \(L_i(q)=-\Pi(q,\xi_i)\) as the maximum
of two affine functions:
\[
L_i(q)
=
\max\left\{
\ell\xi_i-C_u q,\;
C_o q-(r-s)\xi_i
\right\}.
\]
Introduce a loss epigraph variable \(h_i\) satisfying
\[
h_i\ge \ell\xi_i-C_u q,\qquad
h_i\ge C_o q-(r-s)\xi_i,\qquad i=1,\ldots,n.
\]
Let \(z_1^I,z_2^I,z_3^I\) be ideal or target objective values and let
\(d_1,d_2,d_3>0\) be scaling constants. The scalarized finite-candidate
distributionally robust reservation decision can then be written as the
following finite-scenario quadratic program:
\begin{align}
\min_{q,h_i,\rho,\chi,t,\eta_k,u_{ki},\widehat\rho,\widehat\chi,\widehat t}\quad&
\lambda_1\widehat\rho^2+\lambda_2\widehat\chi^2+\lambda_3\widehat t^2
\label{eq:finite_qp_problem}\\
\text{s.t.}\quad&
h_i\ge \ell\xi_i-C_u q,\quad i=1,\ldots,n,
\nonumber\\
&h_i\ge C_o q-(r-s)\xi_i,\quad i=1,\ldots,n,
\nonumber\\
&\rho\ge \sum_{i=1}^{n}p_{ki}h_i,
\quad k=1,\ldots,K,
\nonumber\\
&\chi\ge
\eta_k+\frac{1}{1-\alpha}\sum_{i=1}^{n}p_{ki}u_{ki},
\quad k=1,\ldots,K,
\nonumber\\
&u_{ki}\ge h_i-\eta_k,
\quad i=1,\ldots,n,\quad k=1,\ldots,K,
\nonumber\\
&u_{ki}\ge 0,
\quad i=1,\ldots,n,\quad k=1,\ldots,K,
\nonumber\\
&t\ge V_k^*+\sum_{i=1}^{n}p_{ki}h_i,
\quad k=1,\ldots,K,
\nonumber\\
&\widehat\rho\ge \frac{\rho-z_1^I}{d_1},\quad
\widehat\chi\ge \frac{\chi-z_2^I}{d_2},\quad
\widehat t\ge \frac{t-z_3^I}{d_3},
\nonumber\\
&\widehat\rho,\widehat\chi,\widehat t\ge 0,\qquad
0\le q\le Q.
\nonumber
\end{align}
Problem~\eqref{eq:finite_qp_problem} is the computational form of the paper's
main finite-candidate distributionally robust tri-objective model. The variable \(\rho\) is the epigraph of the
worst-case negative expected profit, \(\chi\) is the epigraph of the worst-case
CVaR loss, and \(t\) is the epigraph of the maximum regret relative to the
distribution-specific best decision. The variables
\(\widehat\rho,\widehat\chi,\widehat t\) are normalized deviations from the
chosen ideal or target levels. All constraints in
\eqref{eq:finite_qp_problem} are linear, and the objective is a positive
semidefinite quadratic function. Hence the joint finite-candidate
distributionally robust expected-profit--CVaR--regret problem admits a standard
convex quadratic programming representation under a finite scenario ambiguity
set.

Operationally, the finite-candidate distributionally robust solution procedure is
as follows. First, construct a
common scenario support for the effective opportunity \(\xi\). Second, construct
the ambiguity set \(\cP=\{P_1,\ldots,P_K\}\) by assigning candidate probability
weights \(p_{ki}\) to the same scenario support. Third, compute the
distribution-specific best values \(V_k^*\) and choose ideal or target levels
\((z_1^I,z_2^I,z_3^I)\) with scales \((d_1,d_2,d_3)\). Fourth, choose the
preference weights \((\lambda_1,\lambda_2,\lambda_3)\) and solve
\eqref{eq:finite_qp_problem}. Finally, repeat the procedure over a grid of weight
vectors to approximate the finite-candidate distributionally robust Pareto
frontier of capacity-reservation decisions. Robustness is retained at the solution stage because every epigraph
constraint is imposed for all \(k=1,\ldots,K\).

\section{Closed-form solutions and analytical implications}
\label{sec:closed_form_solutions}

This section derives analytical benchmark solutions used to interpret the
finite-candidate distributionally robust model. The objective is not to solve the
full distributionally robust tri-objective problem in
closed form. That problem is solved by the finite-scenario epigraph formulation
in Section~\ref{sec:finite_scenario_solution}. Instead, the closed-form
benchmarks identify why non-normality and regret matter: the single-distribution
expected-profit benchmark depends on a distributional quantile, whereas the
support-only regret benchmark depends on the support interval and the
overage--underage cost trade-off.

\subsection{Expected-profit closed-form solution}
\label{subsec:closed_form_ep}

\begin{proposition}[Expected-profit solution and normal approximation]
\label{prop:closed_form_ep}
Suppose Assumptions~\ref{ass:integrability} and~\ref{ass:nondegenerate} hold and \(F\) is continuous. Let
\begin{equation*}
\theta
=
\frac{C_u}{C_u+C_o}
=
\frac{r+\ell-c}{r+\ell-s}.
\end{equation*}
Then the expected-profit maximizing capacity is
\begin{equation}
q^{EP}
=
\operatorname{Proj}_{[0,Q]}\left[F^{-1}(\theta)\right].
\label{eq:closed_form_ep}
\end{equation}
If the opportunity distribution is approximated by a normal distribution with
the same mean \(\mu\) and standard deviation \(\sigma\), the corresponding
moment-matched normal policy is
\begin{equation}
q^{N}
=
\operatorname{Proj}_{[0,Q]}
\left[
\mu+\sigma\Phi^{-1}(\theta)
\right],
\label{eq:normal_benchmark}
\end{equation}
where \(\Phi\) is the standard normal distribution function. The induced
normal-approximation capacity distortion is
\begin{equation*}
\Delta_q^N
=
q^{N}-q^{EP}.
\end{equation*}
\end{proposition}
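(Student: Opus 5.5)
The first claim is essentially Proposition~\ref{prop:critical_fractile} without the interiority qualifier, so the plan is to prove the projected formula directly for every continuous \(F\), and then obtain the normal benchmark as the special case \(F=N(\mu,\sigma^2)\).

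\textbf{Step 1: the expected-profit function.} Write \(\varphi(q)=\E_F[\Pi(q,\xi)]\), which is finite by Assumption~\ref{ass:integrability}. Since \(\Pi(\cdot,\xi)\) is concave for every \(\xi\), \(\varphi\) is concave on \([0,Q]\). Use the identities \(\min\{q,\xi\}=\xi-\pos{\xi-q}\) and \(\pos{q-\xi}=q-\xi+\pos{\xi-q}\), together with \(\frac{d}{dq}\E\pos{\xi-q}=-(1-F(q))\), which holds for continuous \(F\) by dominated convergence. This gives, for every \(q\in(0,Q)\),
\[
\varphi'(q)=C_u-(C_u+C_o)F(q).
\]
Continuity of \(F\) makes the formula valid at every point, not only almost everywhere.

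\textbf{Step 2: locating the maximizer.} Set \(q_0=F^{-1}(\theta)\), with \(\theta\in(0,1)\) by Assumption~\ref{ass:nondegenerate}. Because \(F\) is continuous, \(F(q_0)=\theta\), and monotonicity of \(F\) gives \(F(q)\le\theta\) for \(q<q_0\) and \(F(q)\ge\theta\) for \(q>q_0\). Hence \(\varphi'\ge0\) on \((0,q_0)\) and \(\varphi'\le0\) on \((q_0,\infty)\), so \(q_0\) maximizes \(\varphi\) over \([0,\infty)\). The generalized quantile is defined with \(x\ge0\), so \(q_0\ge0\). Then split into two cases.
\begin{itemize}
\item If \(q_0\le Q\), then \(q_0\) is feasible and therefore optimal on \([0,Q]\).
\item If \(q_0>Q\), then \(\varphi'\ge0\) on \((0,Q)\), so \(\varphi\) is nondecreasing there and \(Q\) is optimal.
\end{itemize}
In both cases the optimum equals \(\operatorname{Proj}_{[0,Q]}[q_0]\), which is \eqref{eq:closed_form_ep}.

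\textbf{Step 3: the normal benchmark.} Apply Step 2 with \(F=N(\mu,\sigma^2)\). Its quantile is \(\mu+\sigma\Phi^{-1}(\theta)\), which gives \eqref{eq:normal_benchmark}. One caveat: the normal law has support on all of \(\R\), whereas \(F^{-1}\) was defined over \(x\ge0\). I would either state that the generalized quantile is taken over \(\R\) in this case, or note that any negative value is clipped to \(0\) by the projection, so the formula is unaffected. The distortion \(\Delta_q^N\) is a definition and needs no proof.

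\textbf{Main obstacle.} The delicate point is justifying the sign pattern of \(\varphi'\) around \(q_0\) when \(F\) is flat near \(\theta\), where there are multiple maximizers. This is settled by the phrase ``the expected-profit maximizing capacity'' being read as ``an optimal solution'', consistent with Proposition~\ref{prop:critical_fractile}. Differentiating under the expectation also requires care, and it is covered by the integrability of \(\xi\) in Assumption~\ref{ass:integrability}.
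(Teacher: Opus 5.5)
Your proposal is correct and follows essentially the same route as the paper: you compute $\varphi'(q)=C_u-(C_u+C_o)F(q)$, use concavity and the first-order condition to identify $F^{-1}(\theta)$ as the unconstrained maximizer, project onto $[0,Q]$, and substitute the $N(\mu,\sigma^2)$ quantile $\mu+\sigma\Phi^{-1}(\theta)$. Your sign-pattern case split for $q_0>Q$ and your remark that a negative normal quantile is clipped to $0$ supply the justification for the projection step, which the paper only asserts in one line.
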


\begin{proof}
The derivative of the expected profit at points where it exists is
\[
\frac{d}{dq}\E_F[\Pi(q,\xi)]
=
C_u-(C_u+C_o)F(q).
\]
Since \(\Pi(q,\xi)\) is concave in \(q\) for every fixed \(\xi\),
\(\E_F[\Pi(q,\xi)]\) is concave in \(q\). Therefore the unconstrained optimizer
satisfies \(F(q)=C_u/(C_u+C_o)\). Projection onto \([0,Q]\)
gives~\eqref{eq:closed_form_ep}. Formula~\eqref{eq:normal_benchmark} follows by
replacing \(F^{-1}\) with the quantile function of \(N(\mu,\sigma^2)\).
\end{proof}

\begin{remark}
Proposition~\ref{prop:closed_form_ep} shows that the newsvendor solution is
already distribution-dependent. The normal model is only one possible choice of
\(F\). Therefore the main issue in the present paper is not whether a normal
distribution can be inserted into the formula, but how much the decision changes
when the critical quantile of a non-normal opportunity distribution is replaced
by a moment-matched normal quantile.
\end{remark}

\subsection{Closed-form expected-profit loss from capacity distortion}
\label{subsec:closed_form_profit_loss}

\begin{proposition}[Expected-profit loss identity]
\label{prop:closed_form_profit_loss}
Under the conditions of Proposition~\ref{prop:closed_form_ep}, the expected
profit loss from using a feasible capacity \(q\) instead of \(q^{EP}\) is
\begin{equation}
\Delta_{\Pi}(q)
=
\E_F[\Pi(q^{EP},\xi)]-\E_F[\Pi(q,\xi)]
=
\int_{\min\{q,q^{EP}\}}^{\max\{q,q^{EP}\}}
\left|C_u-(C_u+C_o)F(z)\right|\,dz.
\label{eq:closed_form_profit_loss}
\end{equation}
\end{proposition}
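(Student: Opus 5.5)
The plan is to express the expected profit as an integral of its derivative and then check that this derivative has a constant sign on the interval between \(q\) and \(q^{EP}\). Write \(f(q)=\E_F[\Pi(q,\xi)]\). Since \(\xi\ge 0\), the layer-cake identities give
\[
\E_F[\min\{q,\xi\}]=\int_0^q (1-F(z))\,dz,\qquad
\E_F[\pos{q-\xi}]=\int_0^q F(z)\,dz,\qquad
\E_F[\pos{\xi-q}]=\E_F[\xi]-\int_0^q(1-F(z))\,dz .
\]
All of these are finite by Assumption~\ref{ass:integrability}. Substituting them into~\eqref{eq:profit} gives the exact representation
\[
f(q)=f(0)+\int_0^q\bigl[C_u-(C_u+C_o)F(z)\bigr]\,dz,\qquad q\in[0,Q].
\]
This holds without differentiability assumptions: \(f\) is absolutely continuous with density \(\phi(z)=C_u-(C_u+C_o)F(z)\). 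This is the same derivative used in Propositions~\ref{prop:critical_fractile} and~\ref{prop:closed_form_ep}. Consequently,
\[
f(q^{EP})-f(q)=\int_q^{q^{EP}}\phi(z)\,dz,
\]
where the integral is oriented.

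The second step is to show that \(\phi\) does not change sign between \(q\) and \(q^{EP}\). The sign must be nonnegative when \(q<q^{EP}\) and nonpositive when \(q>q^{EP}\). Then the oriented integral equals \(\int_{\min}^{\max}|\phi|\), which is exactly~\eqref{eq:closed_form_profit_loss}. Because \(F\) is nondecreasing, \(\phi\) is nonincreasing, and it is enough to place \(q^{EP}\) relative to the level set \(\{F=\theta\}\), where \(\theta=C_u/(C_u+C_o)\). I will split into three cases according to the projection in~\eqref{eq:closed_form_ep}.

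In the interior case, \(q^{EP}=F^{-1}(\theta)\), and continuity of \(F\) gives \(F(q^{EP})=\theta\). Then \(z<q^{EP}\) implies \(F(z)\le\theta\), so \(\phi(z)\ge0\). Likewise \(z>q^{EP}\) implies \(F(z)\ge\theta\), so \(\phi(z)\le0\).

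If the projection hits \(Q\), then \(F^{-1}(\theta)>Q\), and every feasible \(q\) satisfies \(q\le q^{EP}=Q\). For \(z<Q<F^{-1}(\theta)\), the definition of the generalized quantile gives \(F(z)<\theta\), so \(\phi\ge0\) on the relevant interval.

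If the projection hits \(0\), then \(F^{-1}(\theta)=0\), so \(F(0)\ge\theta\). Every feasible \(q\) satisfies \(q\ge0=q^{EP}\), and \(F(z)\ge\theta\) for \(z\ge0\), so \(\phi\le0\).

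The main obstacle is this sign verification at the boundary cases and on flat stretches of \(F\). On such stretches the generalized quantile must be handled carefully, but there \(\phi\) is zero and contributes nothing, so the identity still holds. The layer-cake representation itself is routine once Assumption~\ref{ass:integrability} guarantees \(\E_F[\xi]<\infty\).
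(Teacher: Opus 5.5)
Your proof is correct. It shares the paper's skeleton: the fundamental theorem of calculus between \(q\) and \(q^{EP}\), plus a constant sign of \(C_u-(C_u+C_o)F\) on that interval. You justify both ingredients differently, however.

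The paper asserts that \(H(q)=\E_F[\Pi(q,\xi)]\) is absolutely continuous, citing the proof of Proposition~\ref{prop:closed_form_ep}. That proof only records the derivative where it exists; absolute continuity follows from finite concavity but is not argued there. The paper then takes the sign from concavity of \(H\) together with the fact that \(q^{EP}\) maximizes \(H\).

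You instead derive the exact representation \(H(q)=H(0)+\int_0^q[C_u-(C_u+C_o)F(z)]\,dz\) from the layer-cake identities. This supplies absolute continuity and identifies the density everywhere. You then read off the sign from monotonicity of \(F\) and the definition of the generalized quantile, case by case over the projection.

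The paper's route is shorter because it reuses optimality established earlier. Yours is self-contained, closes the absolute-continuity step, and as a by-product proves that the projected quantile actually maximizes \(H\) on \([0,Q]\), including when it sits at \(Q\).

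Two small remarks:
\begin{itemize}
\item With \(\xi\ge 0\) and \(F\) continuous, \(F(0)=0<\theta\), so your case where the projection hits \(0\) is vacuous, though harmless to keep.
\item Your argument does not really need continuity of \(F\). In the interior case, \(F(z)<\theta\) for \(z<F^{-1}(\theta)\) follows from the definition of the infimum, and \(F(z)\ge\theta\) for \(z\ge F^{-1}(\theta)\) follows from right-continuity. Together these already give the sign, so the identity holds for any nonnegative \(\xi\) with \(\E_F[\xi]<\infty\).
\end{itemize}
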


\begin{proof}
Let \(H(q)=\E_F[\Pi(q,\xi)]\). From the proof of
Proposition~\ref{prop:closed_form_ep}, \(H\) is absolutely continuous on
\([0,Q]\) and its derivative exists almost everywhere. At such points,
\[
H'(q)=C_u-(C_u+C_o)F(q)
\]
with the same expression serving as the continuous subgradient representative
when \(F\) is continuous. Since \(H\) is concave and \(q^{EP}\) maximizes
\(H\), the derivative is nonnegative to the left of \(q^{EP}\) and nonpositive
to the right of \(q^{EP}\). Applying the fundamental theorem of calculus between
\(q\) and \(q^{EP}\) gives~\eqref{eq:closed_form_profit_loss}.
\end{proof}

\begin{remark}
The identity in Proposition~\ref{prop:closed_form_profit_loss} is useful for the
numerical analysis because it converts a distributional approximation error into
an economic loss. It also explains why errors around the critical fractile are
more important than errors around the mean when \(\theta\) is high.
\end{remark}

\subsection{Closed-form scenario maximum-regret solution}
\label{subsec:closed_form_regret}

\begin{proposition}[Support-only minimax regret]
\label{prop:closed_form_regret}
Suppose Assumption~\ref{ass:nondegenerate} holds, only the support interval
\(\xi\in[\underline{\xi},\overline{\xi}]\) is known and
\(Q\ge \overline{\xi}\). Then the scenario regret is
\begin{equation}
R(q,\xi)
=
C_u\pos{\xi-q}+C_o\pos{q-\xi}.
\label{eq:closed_form_scenario_regret}
\end{equation}
The closed-form minimax-regret capacity is
\begin{equation}
q^{SR}
=
\frac{C_o\underline{\xi}+C_u\overline{\xi}}{C_o+C_u}.
\label{eq:closed_form_sr}
\end{equation}
The corresponding worst-case regret is
\begin{equation}
\mathcal R^{SR}
=
\frac{C_o C_u}{C_o+C_u}
\left(\overline{\xi}-\underline{\xi}\right).
\label{eq:closed_form_sr_value}
\end{equation}
\end{proposition}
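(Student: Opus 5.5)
The plan has three steps: derive the regret formula from the piecewise-linear form of \(\Pi\), reduce the worst case over \(\xi\) to the two endpoints, and balance the two endpoint regrets. Proposition~\ref{prop:scenario_regret_closed_form} already contains the first two claims, but I would re-derive them briefly so that the regret value \eqref{eq:closed_form_sr_value} comes out of the same computation.

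\emph{Regret formula.} Since \(Q\ge\overline{\xi}\ge\xi\), the ex post optimal capacity is \(q^*(\xi)=\min\{\xi,Q\}=\xi\). Both branches of the piecewise form of \(\Pi\) give \(\Pi(\xi,\xi)=(r-c)\xi\). Subtracting \(\Pi(q,\xi)\) branch by branch:
\begin{itemize}[label={}]
\item If \(q\le\xi\), then \((r-c)\xi-(r+\ell-c)q+\ell\xi=C_u(\xi-q)\).
\item If \(q>\xi\), then \((r-c)\xi-(r-s)\xi-(s-c)q=(c-s)(q-\xi)=C_o(q-\xi)\).
\end{itemize}
Together these give \eqref{eq:closed_form_scenario_regret}.

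\emph{Reduction to the endpoints.} For fixed \(q\), the map \(\xi\mapsto R(q,\xi)\) is convex and piecewise linear, since Assumption~\ref{ass:nondegenerate} gives \(C_o,C_u>0\). Its supremum over \([\underline{\xi},\overline{\xi}]\) is therefore attained at an endpoint, so
\[
\max\{C_o(q-\underline{\xi})^+ + C_u(\underline{\xi}-q)^+,\; C_u(\overline{\xi}-q)^+ + C_o(q-\overline{\xi})^+\}.
\]
One may restrict to \(q\in[\underline{\xi},\overline{\xi}]\). Any \(q\) outside this interval is dominated by projecting it onto the interval, because both endpoint regrets weakly decrease under that projection. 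On the interval the worst-case regret is \(M(q)=\max\{C_o(q-\underline{\xi}),C_u(\overline{\xi}-q)\}\). Here the first term is strictly increasing and the second strictly decreasing in \(q\).

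\emph{Balancing the two terms.} The minimizer of \(M\) is the unique crossing point \(C_o(q-\underline{\xi})=C_u(\overline{\xi}-q)\), which solves to \eqref{eq:closed_form_sr}. This point lies in \([\underline{\xi},\overline{\xi}]\) because it is a convex combination of the endpoints. It is also feasible, because \(q^{SR}\le\overline{\xi}\le Q\). Substituting back gives \(C_o(q^{SR}-\underline{\xi})=C_oC_u(\overline{\xi}-\underline{\xi})/(C_o+C_u)\), which is \eqref{eq:closed_form_sr_value}.

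The only step needing care is the dominance argument that excludes \(q\) outside the support interval; everything else is direct algebra.
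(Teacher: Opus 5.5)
Your proof is correct and follows the same route as the paper: derive the regret branch by branch from \(\Pi(\xi,\xi)=(r-c)\xi\), reduce the worst case over the support to \(\max\{C_o(q-\underline{\xi}),\,C_u(\overline{\xi}-q)\}\), and equalize the two terms. Your convexity and projection arguments only fill in details the paper asserts directly. In fact that max formula already holds for every \(q\in[0,Q]\), so the dominance step is a convenience rather than a necessity.
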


\begin{proof}
When \(Q\ge \overline{\xi}\), the ex post optimal capacity is \(q^*(\xi)=\xi\).
Comparing \(\Pi(\xi,\xi)\) and \(\Pi(q,\xi)\)
gives~\eqref{eq:closed_form_scenario_regret}. The worst-case regret over the interval
is
\[
\max\{C_o(q-\underline{\xi}),\, C_u(\overline{\xi}-q)\}.
\]
The minimizer equalizes the two terms, which gives~\eqref{eq:closed_form_sr}.
Substituting~\eqref{eq:closed_form_sr} into either term
gives~\eqref{eq:closed_form_sr_value}.
\end{proof}

\begin{remark}
Proposition~\ref{prop:closed_form_regret} is deliberately conservative. Unlike
the critical-fractile policy, it does not use the full distribution \(F\). This
makes it a useful benchmark when the support of the opportunity variable is more
credible than a fitted non-normal distribution.
\end{remark}

\subsection{Analytical implications of non-normality}
\label{subsec:analytical_implications}

The above results imply four testable effects.
\begin{enumerate}[label = (\alph*)]
\item \textit{Quantile effect}. Non-normality affects the risk-neutral decision
through \(F^{-1}(\theta)\), not through the mean alone.
\item \textit{Tail-risk effect}. Heavy-tailed opportunity distributions affect
CVaR and regret more strongly than expected profit because these objectives
evaluate downside losses and worst-case deviations.
\item \textit{Normal-approximation effect}. The difference
between~\eqref{eq:normal_benchmark} and~\eqref{eq:closed_form_ep} measures how much
capacity is misallocated when a non-normal opportunity is summarized only by its
mean and variance.
\item \textit{Local-CDF effect}. The finite-sample difficulty of estimating
\(q^{EP}\) depends on the behavior of \(F\) around \(q^{EP}\). Suppose there exist
\(\delta>0\), \(\gamma>0\), and \(\beta\ge 0\) such that, for all
\(|q-q^{EP}|\le\delta\),
\[
|F(q)-F(q^{EP})|
\ge
\gamma |q-q^{EP}|^{\beta+1}.
\]
Since \(F(q^{EP})=\theta\), this local growth condition implies
\begin{equation*}
|q-q^{EP}|
\le
\gamma^{-1/(\beta+1)}
|F(q)-\theta|^{1/(\beta+1)}
\end{equation*}
for \(q\) in the same neighborhood. Thus, a flatter CDF around the critical
fractile, represented by a smaller \(\gamma\) or a larger \(\beta\), amplifies
the capacity error induced by empirical CDF error.
\end{enumerate}

\begin{remark}
The distributionally robust CVaR and scalarized finite-candidate multi-objective
models do not admit comparable closed-form solutions under arbitrary non-normal
ambiguity sets. This is not a defect of the formulation. It is precisely why the
closed-form policies above are used as benchmarks and the full finite-candidate
distributionally robust model is solved through the finite-scenario quadratic
programming formulation in
Section~\ref{sec:finite_scenario_solution}.
\end{remark}

\section{Numerical analysis}
\label{sec:numerical_analysis}

This section evaluates the proposed model with public RTE balancing-market data.
It follows one empirical decision path. First, we construct an effective aFRR
storage-opportunity series and document its non-normality. Second, we fit a
finite set of log-NMVM candidate distributions and examine the critical capacity
implied by each candidate. Third, we solve the finite-candidate robust policy
comparison for expected profit, CVaR, regret and their multi-objective
compromise. Finally, we vary the critical fractile to show how the economic
trade-off changes the normal-approximation error. Consider a
storage operator that reserves regulation capacity before an operating window.
The reserved capacity \(q\), measured in MW, can be used for ancillary-service
activation, imbalance reduction or related flexibility services. The random
variable \(\xi\), also measured in MW-equivalent units, denotes the effective
amount of storage capacity that can be profitably used during the window. If
\(\xi>q\), the operator misses part of the market opportunity; if \(q>\xi\),
part of the reserved storage capacity remains idle.

\subsection{RTE aFRR data and construction of the opportunity variable}
\label{subsec:rte_descriptive}

The empirical description uses the RTE Balancing Energy API, specifically the
\texttt{standard\_afrr\_data} endpoint, from 1 January 2024 to 1 January 2026.
The raw observations are reported at 15-minute resolution. For each interval
\(t\), let \(A_t^+\) and \(A_t^-\) denote the upward and downward aFRR activated
volumes in France, and let \(\pi_t^+\) and \(\pi_t^-\) denote the corresponding
weighted-average activation prices. Define the activated aFRR opportunity
\[
A_t=A_t^+ + A_t^-,
\]
and the price-intensity index
\[
P_t=
\frac{
A_t^+|\pi_t^+|+A_t^-|\pi_t^-|
}{
A_t^+ + A_t^-
}.
\]
The price-weighted effective opportunity is then
\[
\xi_t=A_t\frac{P_t}{\bar P},
\]
where \(\bar P\) is the sample average of positive \(P_t\). We report
\(\tilde{\xi}_t=\xi_t/1000\) in GW-equivalent units. For daily and weekly
reservation frequencies, the capacity opportunity is aggregated by the
within-period peak,
\[
\tilde{\xi}^{day}_d=\max_{t\in d}\tilde{\xi}_t,
\qquad
\tilde{\xi}^{week}_w=\max_{t\in w}\tilde{\xi}_t,
\]
because \(q\) is a capacity reservation and therefore must cover the relevant
peak opportunity inside the reservation window.

\begin{table}[H]
\centering
\caption{Descriptive statistics of RTE aFRR effective opportunity under three reservation frequencies.}
\label{tab:rte_descriptive_statistics}
\small
\resizebox{\textwidth}{!}{%
\begin{tabular}{lrrrrrrrrrrrrr}
\toprule
Frequency & $N$ & Mean & SD & $Q_{1}$ & $Q_{5}$ & $Q_{10}$ & Median & $Q_{90}$ & $Q_{95}$ & $Q_{99}$ & Max & Skew. & Kurt. \\
\midrule
15-min & 70,176 & 0.411 & 1.081 & 0.001 & 0.006 & 0.012 & 0.123 & 0.911 & 1.710 & 4.735 & 49.535 & 12.516 & 331.583 \\
Daily & 731 & 4.470 & 4.333 & 0.771 & 1.071 & 1.264 & 3.326 & 8.576 & 11.425 & 20.403 & 49.535 & 4.343 & 34.400 \\
Weekly & 105 & 8.710 & 7.494 & 1.954 & 2.398 & 2.783 & 6.897 & 13.783 & 21.260 & 40.410 & 49.535 & 3.121 & 14.955 \\
\bottomrule
\end{tabular}
}
\begin{flushleft}\footnotesize Notes: The variable is reported in GW-equivalent units, computed as $\xi/1000$, where $\xi$ is the price-weighted MW-equivalent aFRR opportunity. For daily and weekly frequencies, $\xi$ is the maximum 15-minute effective opportunity within the corresponding period. The lower-tail quantiles $Q_1,Q_5,Q_{10}$ and upper-tail quantiles $Q_{90},Q_{95},Q_{99}$ are reported to show asymmetry.\end{flushleft}
\end{table}

Table~\ref{tab:rte_descriptive_statistics} shows strong right skewness at all
three frequencies. The 15-minute series is the most extreme, with skewness above
12 and kurtosis above 300. The daily and weekly aggregations reduce but do not
remove the asymmetry. In all cases, the upper-tail quantiles are much farther
from the median than the lower-tail quantiles. For example, the 95th percentile
is about 13.9 times the median in the 15-minute series, and remains more than
three times the median after daily or weekly aggregation. This matters for a
newsvendor-type capacity decision because the expected-profit benchmark is a
critical-quantile rule, while the robust CVaR and regret components are also
driven by tail outcomes. The empirical evidence is therefore used to document
the shape of storage opportunities, not to calibrate a site-specific revenue
model.

\begin{figure}[t!]
\centering
\includegraphics[width=0.95\textwidth]{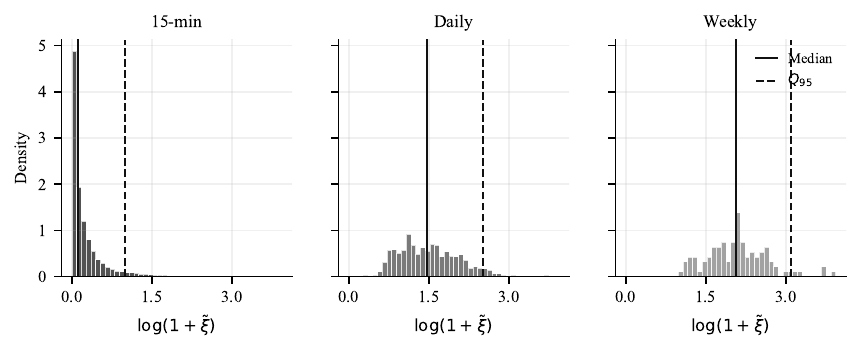}
\caption{Empirical distributions of the effective aFRR opportunity under
15-minute, daily and weekly reservation frequencies. The horizontal axis uses
\(\log(1+\tilde{\xi})\), where \(\tilde{\xi}=\xi/1000\). Solid and dashed
vertical lines mark the median and the 95th percentile, respectively.}
\label{fig:rte_distribution}
\end{figure}

\begin{figure}[t!]
\centering
\includegraphics[width=0.95\textwidth]{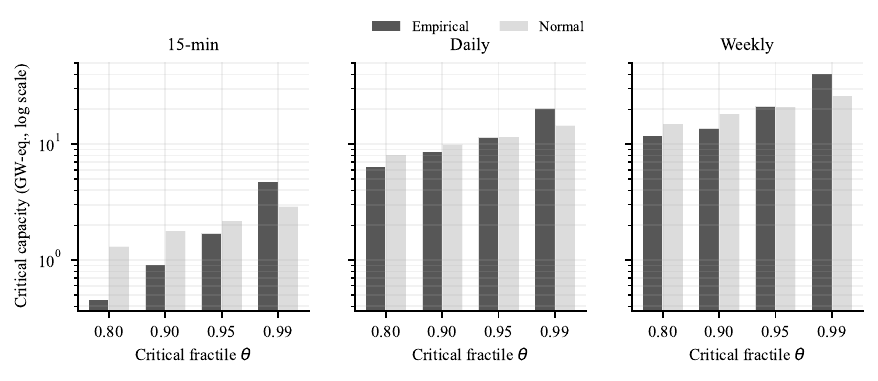}
\caption{Critical-fractile distortion under a moment-matched normal
approximation. The empirical critical capacities are compared with the
corresponding quantiles of a normal distribution matched to the sample mean and
standard deviation.}
\label{fig:rte_quantile_distortion}
\end{figure}

\subsection{Critical-capacity distortion and empirical NMVM candidates}
\label{subsec:rte_nmvm_fit}

The descriptive evidence above shows non-normality, but the newsvendor decision
depends on a specific functional of the distribution: the critical capacity
\(F^{-1}(\theta)\). We therefore fit a finite set of candidate distributions to
the positive 15-minute RTE opportunity series. The candidate set contains the
empirical distribution, a lognormal benchmark, two finite-regime log-NMVM
candidates with three fixed positive stress states, and continuous log-VG and
log-GH candidates. The finite-regime probabilities and latent parameters are
estimated by EM; the log-VG and log-GH candidates are estimated on a
deterministic quantile grid of \(\log \xi\). The direct normal model on \(\xi\)
is retained only as a benchmark policy, not as a robust candidate distribution.

The storage economics used in the empirical decision comparison are
\[
r=8,\qquad c=4,\qquad s=0,\qquad \ell=12,\qquad Q=25,
\]
which imply \(C_o=4\), \(C_u=16\), and
\(\theta=C_u/(C_o+C_u)=0.8\). Because \(\tilde{\xi}=\xi/1000\), the capacities
reported below are in GW-equivalent units and can be multiplied by \(1000\) to
obtain MW-equivalent capacities.

\begin{example}[Critical-capacity distortion]
\label{ex:quantile_distortion}
Table~\ref{tab:rte_15min_em_nmvm_critical_fractile} compares the empirical
\(0.8\)-critical capacity with the capacities implied by the fitted candidates.

\begin{table}[H]
\centering
\caption{Critical-fractile accuracy of EM-estimated log-NMVM candidates under RTE 15-minute aFRR opportunities, reported in GW-equivalent units.}
\label{tab:rte_15min_em_nmvm_critical_fractile}
\small
\begin{tabular}{lrrr}
\toprule
Candidate & $q_{0.8}$ & Error & Abs. error \\
\midrule
Empirical & 0.458 & 0.000 & 0.000 \\
Weibull-shaped log-NMVM & 0.468 & 0.010 & 0.010 \\
Gamma-shaped log-NMVM & 0.471 & 0.014 & 0.014 \\
Log-VG NMVM & 0.444 & -0.014 & 0.014 \\
Log-GH NMVM & 0.439 & -0.019 & 0.019 \\
Lognormal & 0.478 & 0.020 & 0.020 \\
Normal on xi & 1.322 & 0.864 & 0.864 \\
\bottomrule
\end{tabular}
\end{table}

The empirical \(0.8\)-critical capacity is \(0.458\) GW-equivalent. The direct
normal model on \(\xi\) gives \(1.322\) GW-equivalent, over-reserving by
\(0.864\) GW-equivalent. By contrast, the fitted log-NMVM candidates remain close
to the empirical critical capacity: the finite-regime, log-VG and log-GH errors
are all about \(0.02\) GW-equivalent or less. Thus, the empirical data support
the paper's central claim in decision terms. Normality is not merely a poor
descriptive fit; it can move the storage capacity chosen by the critical
fractile rule, while the NMVM candidate family preserves the relevant
non-normal shape more accurately.
\end{example}

\subsection{Finite-candidate robust policy comparison}
\label{subsec:numerical_design}

We next evaluate the finite-candidate robust policies on the empirical
candidate set. All expectations, CVaR values and regrets are computed on
deterministic quantile grids built from the empirical distribution and the
fitted log-NMVM candidates. This avoids Monte Carlo noise in the tables and
makes policy differences attributable to the decision rule rather than to random
simulation error. CVaR is evaluated at \(\alpha=0.90\).

\begin{example}[Robust policy trade-offs]
\label{ex:policy_tradeoff}
Let \(\cP=\{P_1,\ldots,P_K\}\) denote the empirical finite-candidate ambiguity
set. For a capacity \(q\), the reported policy comparison uses
\[
\inf_{P_k\in\cP}\E_{P_k}[\Pi(q,\xi)],\qquad
\sup_{P_k\in\cP}\CVaR_{0.90,P_k}(-\Pi(q,\xi)),
\]
and
\[
\max_{P_k\in\cP}\left\{V_k^*-\E_{P_k}[\Pi(q,\xi)]\right\}
\]
as the worst-case expected profit, worst-case tail loss and maximum regret.
Unmet opportunity is \(\sup_{P_k\in\cP}\E_{P_k}[\pos{\xi-q}]\), and idle
capacity is \(\sup_{P_k\in\cP}\E_{P_k}[\pos{q-\xi}]\). The multi-objective
policy uses the same normalized quadratic compromise form as
\eqref{eq:finite_qp_problem}, with equal weights on expected-profit loss, CVaR
loss and regret.

\begin{table}[H]
\centering
\caption{Empirical finite-candidate distributionally robust policy comparison under EM-fitted RTE 15-minute log-NMVM candidates, reported in GW-equivalent units.}
\label{tab:rte_15min_em_nmvm_fcdr_policy_comparison}
\small
\begin{tabular}{lrrrrrr}
\toprule
Policy & $q$ & $\inf\E[\Pi]$ & $\sup$ CVaR & Regret & Unmet & Idle \\
\midrule
Normal rule & 1.322 & -9.085 & 59.763 & 1.521 & 0.504 & 1.040 \\
Empirical EP & 0.458 & -7.631 & 68.453 & 0.001 & 0.604 & 0.276 \\
Weibull-shaped EP & 0.468 & -7.633 & 68.291 & 0.003 & 0.602 & 0.284 \\
FC-DR EP & 0.438 & -7.630 & 68.775 & 0.006 & 0.608 & 0.260 \\
FC-DR CVaR & 2.862 & -13.727 & 56.893 & 6.501 & 0.428 & 2.516 \\
FC max regret & 0.455 & -7.631 & 68.506 & 0.002 & 0.605 & 0.274 \\
FC-DR MO & 1.498 & -9.546 & 58.987 & 2.006 & 0.492 & 1.204 \\
\bottomrule
\end{tabular}
\begin{flushleft}
\footnotesize
\textit{Notes.} RTE denotes R\'eseau de Transport d'\'Electricit\'e. EM denotes expectation--maximization, NMVM denotes normal mean--variance mixture, FC-DR denotes finite-candidate distributionally robust, EP denotes expected-profit, MO denotes multi-objective, and CVaR denotes conditional value-at-risk. The capacity \(q\), Unmet and Idle are reported in GW-equivalent units. Unmet is \(\sup_{P_k\in\mathcal{P}}\mathbb{E}_{P_k}[(\xi-q)^+]\), and Idle is \(\sup_{P_k\in\mathcal{P}}\mathbb{E}_{P_k}[(q-\xi)^+]\).
\end{flushleft}
\end{table}

The RTE-based policy comparison shows the tri-objective trade-off directly. The
normal rule reserves far more capacity than the empirical and EM-fitted
expected-profit policies; it also has lower worst-case expected profit and
larger regret over the empirical candidate set. The FC-DR expected-profit and
maximum-regret policies stay close to the empirical critical capacity. The
FC-DR CVaR policy chooses a much larger capacity; it reduces worst-case tail
loss and unmet opportunity, but it gives up worst-case expected profit and
creates more idle reserved capacity. The finite-candidate multi-objective policy
lies between these boundary solutions, reflecting the compromise among mean
performance, tail protection and regret robustness.
\end{example}

\subsection{Sensitivity to the critical fractile}
\label{subsec:numerical_sensitivity}

Finally, Table~\ref{tab:numerical_theta_sensitivity} varies the critical
fractile on the empirical RTE 15-minute opportunity distribution. In storage
terms, a larger \(\theta\) corresponds to a more expensive missed market
opportunity or a lower cost of idle reserved capacity. The table compares the
empirical critical capacity with the critical capacity implied by a direct
normal fit on \(\xi\).

\begin{table}[H]
\centering
\caption{RTE storage reservation distortion under a direct normal approximation
at different critical fractiles, reported in GW-equivalent units.}
\label{tab:numerical_theta_sensitivity}
\begin{tabular}{rrrr}
\toprule
\(\theta\) & \(q^{EP}=F^{-1}(\theta)\) & \(q^N\) & \(q^N-q^{EP}\) \\
\midrule
0.6 & 0.188 & 0.685 & 0.497 \\
0.7 & 0.286 & 0.978 & 0.692 \\
0.8 & 0.458 & 1.322 & 0.864 \\
0.9 & 0.913 & 1.797 & 0.884 \\
\bottomrule
\end{tabular}
\end{table}

Table~\ref{tab:numerical_theta_sensitivity} adds a second empirical message.
The normal approximation over-reserves capacity across the reported critical
fractiles, and the absolute error grows as the decision moves toward the upper
tail. Thus, the storage economic parameters \((r,c,s,\ell)\), which determine
\(\theta\), interact with the empirical shape of \(\xi\). Non-normality is not a
generic descriptive issue; it changes the capacity decision most strongly when
the economic trade-off places more weight on avoiding missed high-opportunity
intervals.

Overall, the numerical analysis supports the central modeling claim: under
non-normal uncertainty, storage capacity reservation should be evaluated at the
level of decision quality rather than distributional fit alone. The RTE evidence
shows that empirical storage-opportunity data can generate large
normal-approximation errors at the critical capacity and that EM-fitted
log-NMVM candidates can closely reproduce the empirical critical capacity. The
empirical finite-candidate policy comparison also shows that policies close in
worst-case expected profit can differ substantially in worst-case tail loss,
maximum regret, unmet storage opportunity and idle capacity. These conclusions
are conditional on the specified RTE opportunity construction and economic
parameters. A full market-policy evaluation would take the next step: estimate
candidate distributions on a rolling training window and repeat the
finite-candidate distributionally robust profit, CVaR and regret evaluation on
out-of-sample market windows.

\section{Conclusion}
\label{sec:conclusion}

This paper has developed a finite-candidate distributionally robust
tri-objective newsvendor framework for energy-storage capacity reservation under
non-normal uncertainty and distributional ambiguity. The model has
converted uncertain market opportunities into an effective opportunity variable
and used a unified profit function to compare excess-capacity and
insufficient-capacity costs. On this basis, worst-case expected profit,
worst-case CVaR tail loss and maximum regret have been formulated as one joint
finite-candidate distributionally robust decision problem rather than as
unrelated decision criteria. We have shown
that the resulting model contains tractable critical-fractile, coherent CVaR and
finite-candidate regret components and admits a finite-scenario quadratic
programming form that solves the three distributionally robust objectives
together. The closed-form benchmarks have
clarified how non-normality changes the critical capacity quantile and how normal
approximation creates capacity distortion. The NMVM specification has provided a
positive and flexible non-normal opportunity family that retains market-stress
mixing, skewness and tail behavior that a moment-matched normal benchmark
removes. The numerical comparison has therefore shown not merely that the data
are non-normal, but that the omitted shape information changes the capacity
quantile and the finite-candidate distributionally robust
profit--CVaR--regret evaluation. We have also compared
profit-seeking, tail-risk-averse and regret-robust policies under the same
evening-window storage reservation structure.

\end{document}